\theoremstyle{plain}
\author[Van Tien Nguyen]{Van Tien Nguyen}
\address{Van Tien Nguyen, Department of Mathematics, Institute of Applied Mathematical Sciences, National Taiwan University}
\email{vtnguyen@ntu.edu.tw}
\author[Zhi-An Wang]{Zhi-An Wang}
\address{Zhi-An Wang,
Department of Applied Mathematics,
The Hong Kong Polytechnic University
Hung Hom, Kowloon, Hong Kong}
\email{mawza@polyu.edu.hk}
\author[K. Zhang]{Kaiqiang Zhang}
\address{Kaiqiang Zhang, Department of Applied Mathematics,
The Hong Kong Polytechnic University,
Hung Hom, Kowloon, Hong Kong}
\email{kaiqzhang@polyu.edu.hk}
\subjclass[2020]{35B44, 35C06, 35K57, 35Q92}
\keywords{Keller-Segel system; Type I blow-up; Blow-up profile; Matched asymptotic expansions}
\begin{document}

\numberwithin{equation}{section}
\renewcommand{\theequation}{\arabic{section}.\arabic{equation}}
\theoremstyle{plain}
\newtheorem{exam}{Example}[section]
\newtheorem{theorem}[exam]{Theorem}
\newtheorem{lemma}[exam]{Lemma}
\newtheorem{remark}[exam]{Remark}
\newtheorem{hyp}[exam]{Hypothesis}
\newtheorem{proposition}[exam]{Proposition}
\newtheorem{definition}[exam]{Definition}
\newtheorem{corollary}[exam]{Corollary}
\newtheorem{analytic}[exam]{Analytic extension principle}
\newtheorem{notation}[exam]{Notation}
\setlength{\baselineskip}{1.5\baselineskip}

\title[self-similar blow-up solutions for the Keller-Segel system]{{Infinitely many self-similar blow-up profiles for the Keller-Segel system in dimensions 3 to 9}}
\begin{abstract}
Based on the method of matched asymptotic expansions and Banach fixed point theorem, we rigorously construct infinitely many self-similar blow-up profiles for the  parabolic-elliptic Keller-Segel system
\begin{equation*}
\left\{\begin{array}{l}
\partial_{t} u=\Delta u-\nabla \cdot\left(u \nabla \Phi_{u}\right),  \\
0=\Delta \Phi_{u}+u,\\
u(\cdot,0)=u_0 \geq 0
\end{array}\quad \text{in}\ \mathbb{R}^{d},\right.
\end{equation*}
where $d\in \{3,\cdots,9\}$.
%In this paper, we consider the parabolic-elliptic Keller-Segel system in dimensions ranging from 3 to 9.
%It is known that solutions to this model may develop singularities in finite time for certain classes of initial data.
Our findings demonstrate that the infinitely many backward self-similar profiles approximate the rescaling radial steady-state near the origin (i.e. $0<|x|\ll1$) and $\frac{2(d-2)}{|x|^2}$ at spatial infinity (i.e. $|x|\gg1$). %For this first time, we rigorously construct these solutions using the method of matched asymptotic expansions and fixed point arguments.
We also establish the convergence of the self-similar blow-up solutions as time tends to the blow-up time $T>0$. Our results can give a refined description of backward self-similar profiles for all $|x|\geq 0$ rather than for $0<|x|\ll1$ or $|x|\gg1$, indicating that the blow-up point is the origin and
$$
u(x,t)\sim \frac{1}{|x|^2},\ \ \ x\ne0,\  \text{as}\ t\to T.
$$
\end{abstract}

\maketitle
\section{Introduction}
This paper is concerned with the parabolic-elliptic Keller-Segel system
\begin{equation}\label{1.1}
\left\{\begin{array}{l}
\partial_{t} u=\Delta u-\nabla \cdot\left(u \nabla \Phi_{u}\right),  \\
0=\Delta \Phi_{u}+u,
\end{array}\quad \text{in}\ \mathbb{R}^{d},\right.
\end{equation}
equipped with an initial data $u(\cdot,0)=u_0$, where $d\in \{3,\cdots,9\}$.  The system \eqref{1.1} is the so-called minimal chemotaxis used to describe the chemotactic motion of mono-cellular organisms, where $u(x,t)$ represents the cell density and $\Phi_{u}$ stands for the concentration of the chemoattractant \cite{Keller-Segel}. System \eqref{1.1} also models the self-gravitating matter in stellar dynamics in astrophysical fields \cite{Wolansky}. This system has been extensively studied due to its rich biological and physical backgrounds and lot of interesting results have been obtained, e.g., see \cite{Blanchet-2008,Collot-2022, Colllot-Zhang,Davila-2020, Horstmann-2003,Horstmann-2004,Hou-Liu-Wang-Wang,Tao-Wang-2013,Mellet-2024,Winkler-2019} and references therein.

%For $d\ge1$, in the radial case, there holds
%$$
%\nabla\Phi_u(x)=\frac{-x}{|x|^d|\mathbb{S}^{d-1}|}\int_{|y|<|x|}u(|y|)dy,
%$$
%so that \eqref{1.1} becomes the radial nonlocal semilinear parabolic equation
%$
%u_t=\nabla\cdot\left(\nabla u+u\frac{x}{|\mathbb{S}^{d-1}||x|^d}\int_{|y|<|x|}u(y)dy\right).
%$$

For any radial initial data $u_0\in L^\infty(\mathbb{R}^d)$, there exists a maximal time of existence $T > 0$ such that \eqref{1.1} admits a unique smooth solution on $(0,T)\times \mathbb{R}^d$, see \cite{Giga-Mizoguchi-Senba-2011}. One may refer to \cite{Bedrossian-Masmoudi-2014,Biler-2018} for other local well-posedness spaces. Due to the quadratic nature of the convective term in \eqref{1.1}, the solutions may blow up in finite time $T<+\infty$ in the sense that
$$
\limsup \limits_{t\to T}||u(t)||_{L^\infty(\mathbb{R}^d)}=+\infty.
$$
If blow-up occurs, then it holds that
$$
||u(t)||_{L^\infty(\mathbb{R}^d)}\ge (T-t)^{-1},\ \ 0<t<T,
$$
by a comparison principle. We say that the blow-up is of type I if
$$
\limsup \limits_{t\to T}(T-t)||u(t)||_{L^\infty(\mathbb{R}^d)}<\infty,
$$
otherwise, the blow-up is of type II. The blow-up set $B(u_0)$ is defined by
$$
B(u_0):=\{x_0\in\mathbb{R}^d:|u(x_j,t_j)|\to\infty\  \text{for some sequence}\ (x_j,t_j)\to(x_0,T)\},
$$
and we call $x_0$ the blow-up point.
Thanks to the divergence structure of \eqref{1.1}, the total mass of the solution is conserved in the following sense:
$$
M(u_0):=\int_{\mathbb{R}^{d}}u_0(x)dx=\int_{\mathbb{R}^{d}}u(x,t)dx,\ \ 0\le t<T.
$$

Problem \eqref{1.1} admits the  following scaling invariance: for all $a \in \mathbb{R}^{d}$ and $\lambda > 0$, the function
\begin{equation}\label{scaling}
  u_{\lambda,a}(x,t)=\frac{1}{\lambda^2}u\left(\frac{x-a}{\lambda},\frac{t}{\lambda^2}\right)
\end{equation}
also solves \eqref{1.1}. This scaling invariance gives rise to the notion of the mass-criticality in the sense that
$$ \|u_{\lambda,a}\|_{L^1(\mathbb{R}^{d})} = \lambda^{d-2}\| u\|_{L^1(\mathbb{R}^{d})},$$
by which $d = 2$ is referred to as the mass critical case, while $d = 1$ and  $d \geq 3$ the mass sub-critical and the mass super-critical cases, respectively.

The solution of \eqref{1.1} exists globally for $d=1$ as proved in \cite{Nagai-1995,Childress-Jerome}.
The critical mass threshold $8\pi$ acts as a sharp criterion separating the global existence from finite-time blow-up in the case of $d=2$, see \cite{Childress,Childress-Jerome,Diaz-1998,Blanchet-2006,Blanchet-Carrillo-Laurençot-2009}. The $8\pi$ mass threshold implies that supposing $$u_0\ge0,\ \ (1+x^2+|\ln u_0|)u_0\in L^1(\mathbb{R}^{2}),$$
 the positive solution of \eqref{1.1} blows up in finite time for $M > 8\pi$ \cite{Kurokiba-2003,Raphael-2014} and exists globally in time  for $M < 8\pi$\cite{Blanchet-2006,Dolbeault-2004}. If $M = 8\pi$, radial solutions exist globally in time \cite{Biler-2006} but infinite-time blow-up solutions with $8\pi$ mass may exist as constructed in \cite{Blanchet-2008,Davila-2020,Ghoul-2018}. For $M>8\pi$, a refined finite time blow-up profile was obtained with the form
 \begin{equation}\label{uniq}
  u(x,t)\sim\frac{1}{\lambda^2(t)}U\left(\frac{x}{\lambda(t)}\right),\ \    \lambda(t)\sim\sqrt{T-t}e^{-\sqrt{\frac{|\log(T-t)|}{2}}},
 \end{equation}
 where $U(x)=\frac{8}{(1+|x|^2)^2}$ is a steady-state solution of \eqref{1.1}, see \cite{Buseghin-2023,Raphael-2014,Herrero-Velázquez-1996,Velázquez-2002,Collot-2022}. The form \eqref{uniq} is the unique finite time blow-up behavior for radial non-negative solutions of \eqref{1.1} \cite{Mizoguchi-2022}.  An interesting phenomenon that two steady-state solutions are simultaneously collapsing and colliding is recently constructed in \cite{Collot-2025}.
 It is remarkable that any blow-up solutions are of type II for $d=2$, see  \cite{Naito-2008,Suzuki-2011}.

For $d\ge3$,  we note that the system \eqref{1.1} is referred to as the $L^1$-supercritical and $L^{d/2}$-critical since the scaling transformation \eqref{scaling} preserves the $L^{d/2}-$norm, i.e., $||u_{\lambda,a}||_{L^{\frac{d}{2}}(\mathbb{R}^{d})}=||u||_{L^{\frac{d}{2}}(\mathbb{R}^{d})}$.
%Since the Lebesgue exponent $\frac{d}{2}$ is larger than 1, which is the exponent associated with the conserved mass.
Initial data with small $L^{d/2}-$norm lead to solutions that exist globally in time \cite{Corrias}. Subsequently, this result was improved in \cite{Calvez-2012} by showing that if the $L^{d/2}-$norm of initial data is less than a sharp constant derived from the Gagliardo-Nirenberg inequality, then the solution exists globally.
Large initial data give rise to finite-time blow-up \cite{Corrias,Calvez-2012,Nagai-1995}. In contrast to dimension $d=2$, the solutions of \eqref{1.1} with $d\ge3$ may blow up in finite time for an arbitrary mass since $M(u_{\lambda,a})=\lambda^{d-2}M(u)$.

Singularity formation of blow-up solutions to system \eqref{1.1} for $d\ge3$
exhibits rich dynamical behavior. When the initial data are nonnegative and radially non-increasing, it was shown in \cite{Mizoguchi-2011} that all blow-up solutions of \eqref{1.1} are of type I for $d\in[3,9]$.
%With some additional conditions on the initial data,  it was further shown in \cite{Zhou-2024,Souplet-Winkler} that all blow-up solutions  satisfy $u(x,T)\sim |x|^{-2}$ as $0<|x|\ll 1$ for $d\geq 3$.
A family of type I self-similar blow-up solutions was obtained by the shooting method in \cite{Michael,Herrero-1998,Naito-Senba-2012}. Remarkably, it was shown in \cite{Giga-Mizoguchi-Senba-2011} that all radial and non-negative type I blow-up solutions are asymptotically backward self-similar near the origin as $t\to T$, which signifies the significance of backward self-similar profiles for understanding the structure of singularities. A new type I-log blow-up solution of \eqref{1.1} in dimensions 3 and 4 was constructed in \cite{Nguyen-Zaag-2023}. There are also type II blow-up solutions for $d \ge3$ \cite{Herrero-Medina-1997, Collot-2023, Mizo-Senba}. The authors of \cite{Collot-2023} showed the existence and radial stability of type II blow-up solutions, characterized by mass concentrating near a sphere that shrinks to a point. This pattern, known as collapsing-ring blow-up, also emerges in the nonlinear Schr$\mathrm{\ddot{o}}$dinger equation \cite{Merle-2014,Fibich-2007}. For $d\ge 11$, type II solutions concentrating at a steady-state solution are constructed in \cite{Mizo-Senba}. This paper is concerned with type I blow-up solutions.

 Backward self-similar solutions of \eqref{1.1} are of the form
\begin{equation}\label{gmfs}
u(x,t)=\frac{1}{T-t}U\left(y\right),\ \ y=\frac{x}{\sqrt{T-t}},
\end{equation}
where $U(y)$ is the backward self-similar profile satisfying
\begin{equation}\label{self-similar}
\Delta U-\frac{y\cdot\nabla U}{2}-U-\nabla\cdot(U\nabla\Phi_{U})=0,\ \Delta \Phi_U+U=0.
\end{equation}
We denote $r=|y|$. In the radial case, for $d\ge1$, there holds
$$
\partial_r\Phi_U(r)=-\frac{1}{r^{d-1}}\int_0^rU(s)s^{d-1}ds.
$$
Then the equation \eqref{self-similar} can be written in the radial form
\begin{equation}\label{zq11}
\partial_{rr} U+\frac{d-1}{r}\partial_rU-\frac{1}{2}r\partial_rU-U+U^2+\left(\frac{1}{r^{d-1}}\int_0^rU(s)s^{d-1}ds\right)\partial_r U=0.
\end{equation}
 There are four known classes of solutions of \eqref{zq11}:
\begin{itemize}
    \item For $d\ge1$, the constant solutions
    \begin{equation}\label{explicit1}
     \bar{U}_0=0,\ \ \bar{U}_1=1.
    \end{equation}
     \item For $d\ge3$, the solution singular at the origin
      \begin{equation}\label{explicit2}
     \bar{U}_2=\frac{2(d-2)}{r^2}.
     \end{equation}
     \item For $d\ge3$, the explicit smooth positive solution \cite{Michael}
     \begin{equation}\label{explicit4}
     \bar{U}_3= \frac{4(d-2)(2d+r^2)}{(2(d-2)+r^2)^2}.
     \end{equation}
     \item For $d\in[3,9]$, there exists a countable family of positive smooth radially symmetric solutions $\{\bar{U}_n\}_{n\ge4}$ \cite{Herrero-1998,Michael,Naito-Senba-2012}, where
     \begin{equation}\label{explicit3}
     \bar{U}_n\sim\frac{1}{r^2},\ \ \text{as}\ \ r\to +\infty.
     \end{equation}
\end{itemize}

 With the shooting method,  a family of radially symmetric solutions $\{\bar{U}_n\}_{n\ge4}$ has been constructed in \cite{Herrero-1998} for $d=3$ and in \cite{Michael,Naito-Senba-2012}  for $3\le d\le 9$. For $d=3$, it was shown in
 %Glogic and Sch$\mathrm{\ddot{o}}$rkhuber
 \cite{Glogic} that $\bar{U}_3$ is a stable self-similar profile based on the semigroup approach. Very recently, the non-radial stability of $\bar{U}_3$ was proved in \cite{Li-Zhou-2025}. For $d\ge3$, it was proved in \cite{Colllot-Zhang} that all the fundamental self-similar profiles $\{\bar{U}_n\}_{n\ge3}$ are conditionally stable (of finite co-dimension).

Backward self-similar profiles of \eqref{1.1} (i.e. the solutions of \eqref{self-similar}) are still not completely classified, even in the radial setting. Accurately describing the self-similar profiles is a crucial step in classifying all possible blow-up profiles for \eqref{1.1} (at least in the radial case). %In addition, all radial, non-negative type I blow-up solutions are asymptotically backward self-similar near the origin as $t\to T$ \cite{Giga-Mizoguchi-Senba-2011}, which signifies the significance of backward self-similar profiles for understanding the structure of singularities.

This paper aims to construct more precise backward self-similar profiles by using different approaches. We recall some results below in connection with our work. For $d=3$, the
 authors of \cite{Herrero-1998} showed that there exists a sequence of self-similar profiles (i.e. solutions of \eqref{zq11}), denoted by  $\{G_n(r)\}_{n\geq 1}$, which satisfy
 $$
 G_n(r)\sim K_n\ \text{as}\ r\to0,\ \ \lim_{r\to\infty}G_n(r)=\frac{A_n}{r^2},
 $$
 where $K_n>0$, $A_n$ are constants, and $\lim \limits_{n\to +\infty}K_n=\infty$.
Subsequently, for $3\le d\le9$,
it was shown in \cite{Michael} that there exists a countable number of self-similar profiles $\{\bar{G}_n\}_{n\geq 1}$ satisfying
 $$
 \bar{G}_n(r)\lesssim 1\ \text{as}\ r\to0,\ \lim_{r\to\infty}\bar{G}_n(r)=\frac{c_n}{r^2},\ \text{for  some constant}\ c_n\in(0,2].
 $$
The works \cite{Michael, Herrero-1998} discovered two essential common properties for the family of self-similar profiles for fixed $n$, that is they are bounded as $0<r\ll1$  and behave like $\frac{1}{r^2}$ as $r\gg 1$.
In another work \cite{Naito-Senba-2012}, for $3\le d\le9$, the authors proved that there exist a countable number of self-similar profiles $\{\tilde{G}_n(r)\}_{n\geq 1}$ which are bounded near the origin for every $n\geq 1$ and
\begin{equation}\label{asy1}
\lim_{n\to\infty}\tilde{G}_n(0)=+\infty,\ \lim_{n\to\infty}\tilde{G}_n(r)=\frac{2(d-2)}{r^2}\ \text{for}\ r>0.
\end{equation}
The work \cite{Naito-Senba-2012} gave an asymptotic description of self-similar profiles as $n \to \infty$.  For fixed $n\geq 1$, the self-similar profiles were precisely described only for $r\gg1$  in \cite{Michael, Herrero-1998}, while the precise description of self-similar profiles for $r>0$ not large are unavailable. Recently, for $d\geq 3$, self-similar profiles of blow-up solutions to \eqref{1.1} were shown to behave like $\frac{1}{r^{2}}$ for $0<r\ll 1$ for a certain class of radially non-increasing initial data in \cite{Zhou-2024} by the zero number argument, answering an open question in \cite{Souplet-Winkler}. In this paper, by using a different approach, namely the method of matched asymptotic expansions and the Banach fixed point theorem, we obtain a precise description of self-similar profiles $U_n(r)$ for all $r \in [0,\infty)$, as described in \eqref{eq:Un} below.
% In summary, the above-mentioned works obtained two essential common properties on the family of self-similar profiles denoted universally by $\{G_n(r)\}_{n\geq1}$: (a) $G_n(r)$ is bounded as $0<r\ll1$ for fixed $n$; (b) $G_n(r) \sim \frac{1}{r^2}$ as $r\gg 1$. In other words,
%the self-similar profiles are precisely described only for $r\gg1$.

To state our result, we first present the asymptotic behavior of  steady-state solution of \eqref{1.1}.
Let $Q(r)$ be the unique solution to
\begin{equation}\label{steady-equation11}
\left\{\begin{aligned}
&\partial_{rr} Q+\frac{d-1}{r}\partial_rQ+Q^2+\partial_rQ\frac{1}{r^{d-1}}\int_0^rQ(s)s^{d-1}ds=0, \\
&Q(0)=1,\ \ Q'(0)=0.
\end{aligned} \right.
\end{equation}
It is clear that $Q(r)$ is a radial steady-state solution of \eqref{1.1} with $r=|x|$. It will be shown in Section \ref{Construction} that the asymptotic behavior of $Q$ is
%We know from \eqref{ALM} and \eqref{steady} (by the equation $Q=2d\bar{Q}+2r\partial_r\bar{Q}$)
$$
Q(r)=\frac{2(d-2)}{r^2}+O(r^{-\frac{5}{2}}),\ \ \text{as}\ \ r\to+\infty,
$$
where $Q=2d\bar{Q}+2r\partial_r\bar{Q}$ and the asymptotic profile of $\bar{Q}$ as $r \to \infty$ is given in \eqref{steady}.
Our main results are stated as follows.
\begin{theorem}\label{Main-theorem}
For $3\le d\le 9$, there exist infinitely many smooth radially symmetric solutions $U_n(y)$ ($n\in \mathbb{N}$) to the self-similar equation \eqref{self-similar}.
 Moreover, there exists a sufficiently small constant $r_0>0$ independent of $n$ such that the following results hold.
 \begin{itemize}
 \item[1.] (Profiles near the origin). There exists a sequence $\mu_n>0$ with
    $
    \lim \limits_{n\to +\infty}\mu_n=0
    $
    such that
   \begin{equation}\label{wat2}
       \lim_{n\to+\infty}\sup_{r\le r_0}\left|U_n(r)-\frac{1}{\mu_n^2}Q\left(\frac{r}{\mu_n}\right)\right|=0.
   \end{equation}
   \item[2.] (Profiles away from the origin). As $r\geq r_0$, $U_n(r)$ satisfies
\begin{equation}\label{wat3}
\lim_{n\to+\infty}\sup_{r\ge r_0}(1+r^2)\left|U_n(r)-\frac{2(d-2)}{r^2}\right|=0.
\end{equation}
\end{itemize}
For any $0<T<+\infty$, the solution of \eqref{1.1} with initial data $u_0=\frac{1}{T}U_n(\frac{x}{\sqrt{T}})$ blows up at  time $T$ with
$$
u(x,t)=\frac{1}{T-t}U_n\left(\frac{x}{\sqrt{T-t}}\right),
$$
where the blow-up is of type I and $B(u_0)=0$. Moreover, there exists a function $u^*(x) \sim \frac{1}{|x|^2}$ such that
$\lim\limits_{t \to T} u(x,t)=u^*(x)$ for all $|x|>0$ and
 \begin{equation}\label{dzj}
 \lim_{t\to T}||u(\cdot,t)-u^*(\cdot)||_{L^p(\mathbb{R}^d)}=0, \ \forall \ p\in[1,\frac{d}{2}).
\end{equation}
% no mass is
%concentrated at the origin, i.e.,
%$$
%\lim_{r\to0}\lim_{t\to T}\int_{|x|\le r}u(x,t)dx=0.
%$$
\end{theorem}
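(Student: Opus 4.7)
The plan is to construct each $U_n$ by matched asymptotic expansions on two spatial scales, followed by a Banach fixed point argument that upgrades the formal approximation to a genuine solution. The two scales reflect the dichotomy in \eqref{wat2}--\eqref{wat3}: on the inner scale $r\sim \mu_n$ the profile is modeled by the rescaled regular steady state $\mu_n^{-2}Q(r/\mu_n)$, while on the outer scale $r\sim 1$ it is modeled by the singular solution $\bar U_2(r)=2(d-2)/r^2$. Matching the two expansions in the overlap region $\mu_n\ll r \ll 1$ will simultaneously pin down the free parameters of each expansion and select the countable sequence $\mu_n\to 0$.

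For the inner analysis, I would introduce $\rho=r/\mu$ and $V(\rho)=\mu^2 U(\mu\rho)$, which turns \eqref{zq11} into
\begin{equation*}
\partial_{\rho\rho}V+\frac{d-1}{\rho}\partial_\rho V+V^2+\Bigl(\frac{1}{\rho^{d-1}}\int_0^\rho V(\sigma)\sigma^{d-1}d\sigma\Bigr)\partial_\rho V=\mu^2\Bigl(\frac{1}{2}\rho\,\partial_\rho V+V\Bigr),
\end{equation*}
which is a small perturbation of the stationary equation \eqref{steady-equation11} solved by $Q$. Linearizing about $Q$ yields an operator $\mathcal{L}_Q$ whose indicial exponents at $\rho\to\infty$ are governed by $Q(\rho)\sim 2(d-2)\rho^{-2}$; for $3\le d\le 9$ these exponents form a complex-conjugate pair, so the fundamental solutions exhibit oscillations of the form $\rho^{-\alpha}\sin(\omega\log\rho+\varphi)$. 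This is precisely where the dimensional restriction enters, in the spirit of the Joseph--Lundgren threshold. For the outer analysis, I would write $U(r)=\bar U_2(r)+W(r)$, linearize \eqref{zq11} about $\bar U_2$, and extract the fundamental solutions of the resulting ODE, which likewise contain an oscillatory pair as $r\to 0$. Equating the amplitudes and phases of the inner oscillations (as $\rho\to\infty$) with the outer oscillations (as $r\to 0$) throughout the overlap zone $\mu\ll r\ll 1$ reduces matching to a transcendental condition of the form $\omega\log(1/\mu)\equiv c \pmod{\pi}$, whose solutions are exactly the sequence $\mu_n\to 0^+$, producing infinitely many approximate profiles $U_n^{app}$.

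With an explicit $U_n^{app}$ in hand (glued by a cut-off across the overlap region), I would set $U_n=U_n^{app}+\Psi_n$, derive the linear equation for $\Psi_n$ with the quadratic remainder as a forcing term, and invert the linearized operator in a weighted space tailored to the two scales, for example a norm of the type $\|\Psi\|_*=\sup_{r\ge 0}(1+r^2)|\Psi(r)|$ combined with an inner rescaled control $\sup_{\rho\le R}|\mu_n^2\Psi(\mu_n\rho)|$ for a fixed large $R$. A standard Picard iteration on a small ball in this norm then produces a unique $\Psi_n$, and the estimates \eqref{wat2}--\eqref{wat3} follow directly from the construction together with the asymptotics of $Q$. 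The main obstacle I anticipate is the uniform-in-$n$ inversion of the linearized operator: the inverse must simultaneously respect the inner rescaling at $\rho\sim 1$, the outer decay $r^{-2}$, and the delicate oscillatory matching zone $\mu_n\ll r \ll 1$, which demands a careful choice of fundamental solutions and Wronskian-based Green's functions, plus a meticulous bookkeeping of the forcing across the three regions.

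The blow-up claims then follow from the scaling structure. For the ansatz $u(x,t)=(T-t)^{-1}U_n(x/\sqrt{T-t})$, direct substitution confirms that $u$ solves \eqref{1.1} on $[0,T)$, and the bound $\|u(\cdot,t)\|_{L^\infty}\le (T-t)^{-1}\|U_n\|_{L^\infty}$ gives Type I blow-up. For any fixed $x\ne 0$, $y=x/\sqrt{T-t}\to\infty$ as $t\to T$, so \eqref{wat3} yields $u(x,t)\to u^*(x):=2(d-2)/|x|^2$, which shows in particular that $B(u_0)=\{0\}$. Since $|x|^{-2p}$ is locally integrable precisely when $2p<d$, dominated convergence applied with the $n$-independent decay encoded in \eqref{wat3} as a uniform dominating majorant yields \eqref{dzj} on the range $p\in[1,d/2)$.
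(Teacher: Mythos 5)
Your overall skeleton is the same as the paper's: an inner profile $\mu_n^{-2}Q(r/\mu_n)$, an outer profile $2(d-2)/r^2$, complex indicial roots for $3\le d\le 9$ producing oscillations of the type $r^{-5/2}\sin(\omega\log r+\varphi)$ whose phase matching in the overlap region selects a discrete sequence $\mu_n\to0$, and routine scaling arguments for the blow-up statements. However, two steps of your plan are genuine gaps as written. First, you linearize \eqref{zq11} directly about $Q$ and about $\bar U_2$ and speak of indicial exponents and "fundamental solutions of the resulting ODE", but \eqref{zq11} is nonlocal: its linearization is an integro-differential equation, so the Wronskian/Green's function machinery you invoke does not apply to it as stated. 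The paper removes this obstruction at the outset by passing to the reduced mass \eqref{wphi}, which converts the problem into the local equation \eqref{hl} in $\mathbb{R}^{d+2}$; all the ODE analysis (Kummer functions, oscillatory fundamental solutions, resolvent bounds) is carried out there and only at the end translated back through $U=2d\Phi+2r\partial_r\Phi$. Without this (or an equivalent) reduction your inner and outer linear theories do not exist in the form you use them.

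Second, and more seriously, the passage from the formal matching to an exact solution — which you yourself flag as the main obstacle — is precisely what your route leaves unresolved. Gluing $U_n^{app}$ by a cut-off and running one global fixed point in a norm like $\sup_r(1+r^2)|\Psi|$ requires inverting the linearized operator about the glued profile uniformly in $n$; that operator carries slowly decaying oscillatory modes of size $r^{-5/2}$ in the overlap zone and an approximate kernel generated by the scaling mode of the inner problem (the paper's $\Lambda\bar Q$), so a uniformly bounded inverse in your space is exactly what is in doubt, and ordinarily one must add a Lyapunov--Schmidt adjustment of the parameters to kill these directions. The paper sidesteps this entirely: it builds \emph{exact} solutions separately on $[0,r_0]$ (Proposition \ref{interior}, parameter $\lambda$) and on $[r_0,\infty)$ (Proposition \ref{taxi}, parameter $\varepsilon$), matches the values at the single point $r_0$ via the implicit function theorem to obtain $\varepsilon(\lambda)$, and then matches first derivatives by showing the mismatch $\mathcal{F}[r_0](\lambda)$ behaves like $\lambda^{1/2}\sin(-\tfrac{\sqrt7}{2}\log\lambda+c_8-c_2)$ and changes sign along $\lambda_{k,\pm}\to0$, so the intermediate value theorem yields $\mu_n$ with exact $C^1$ matching; uniqueness for the second-order problem then gives one smooth solution with no global correction and no cut-off. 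Unless you either carry out the uniform inversion with the near-kernel handled, or switch to this two-domain exact matching, your construction of $U_n$ is incomplete. The blow-up part of your argument is essentially fine, except that pointwise convergence of $u(x,t)$ for fixed $x\ne0$ requires the existence of $\lim_{|y|\to\infty}|y|^2U_n(y)$ (coming from the refined asymptotics $u_1=r^{-2}(1+O(r^{-2}))$ and $w=O(r^{-4})$), not just the uniform-in-$n$ bound \eqref{wat3}, and that limit is a constant close to, but not exactly, $2(d-2)$, consistent with the theorem's claim $u^*\sim|x|^{-2}$ rather than an exact identity.
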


\begin{remark}\label{rem}
Based on the proof of Theorem \ref{Main-theorem},  the profile of the solutions $U_n$ of \eqref{self-similar}, as constructed in Theorem \ref{Main-theorem}, can be more precisely described as follows. First, we define
$$\mathcal{U}=2d\tilde{u}_1+2r\partial_r\tilde{u}_1$$
where $\tilde{u}_1:=u_1$ is a known function  for $d=3$ (see Lemma  \ref{ODE}\footnote{The definitions of $\tilde{u}_1$ for $d\in[4,9]$ are obtained by the same process as in Lemma \ref{ODE}.}).
%in the case $d=3$\footnote{The definitions of $\tilde{u}_1$ for $d\in[4,9]$ are obtained by the same process as in Lemma \ref{ODE}.}.
Then there exist $$0<r_0\ll1,\ 0<\mu_n<r_0,\  0<\varepsilon(\mu_n)\ll r_0^{\frac{1}{2}}$$ with $\lim \limits_{n\to +\infty}\mu_n=0,\  \lim \limits_{n\to +\infty}\varepsilon(\mu_n)=0,$
and
$$
\tilde{\mathcal{U}}\in \tilde{X}_{r_0},\ \ \tilde{Q}\in \tilde{Y}_{\frac{r_0}{\mu_n}}
$$
where the definitions
%\footnote{The definitions of the spaces $\tilde{X}_{r_0}$, $\tilde{Y}_{r}$ for $d\in[4,9]$ are similar by the same process of the proof for $d=3$.}
of the spaces $\tilde{X}_{r_0}$, $\tilde{Y}_{r}$ are given in  \eqref{swiland} and \eqref{jsj} for $d=3$, respectively\footnote{The definitions of the spaces $\tilde{X}_{r_0}$, $\tilde{Y}_{r}$ for $d\in[4,9]$ are similar by the same process of the proof for $d=3$.},
such that
\begin{equation}\label{eq:Un}
U_{n}(r):=\begin{cases}
\big(\frac{Q}{\mu_n^2}+\mu_n^2\tilde{Q})\big(\frac{r}{\mu_n}\big) &\text{for }0\leq r\leq r_0,\vspace{2ex}\\ \frac{2(d-2)}{r^2}+\varepsilon(\mu_n)(\mathcal{U}+\tilde{\mathcal{U}})(r)  &\text{for }r>r_0,\end{cases}
\end{equation}
solves \eqref{zq11}.
\end{remark}

%The shooting method was used in \cite{Herrero-1998,Naito-Senba-2012,Michael} to prove the above-mentioned results.
%In this paper, by using a different approach, namely matched asymptotic expansions along with the Banach fixed point theorem,
By \eqref{eq:Un} we obtain a precise description of self-similar profiles $U_n(r)$ for all $r \in [0,\infty)$. In particular, we show that $U_n(r)$ behaves like the rescaled steady-state solutions $\frac{1}{\mu^2_n}Q(\frac{r}{\mu_n})$ for $0\le r\ll1$ and
 $U_n(r)\sim \frac{2(d-2)}{r^2}$ for $r\gg1$.
 For $3<d\leq 9$, we know from \eqref{eq:Un} that the profiles obtained in this paper are different from those in \cite{Michael} since $2(d-2)>2$, but have the same asymptotic properties as in \eqref{asy1}  as $n \to \infty$. Whether the self-similar profiles constructed in \cite{Herrero-1998,Naito-Senba-2012} and in Theorem \ref{Main-theorem} are equivalent is an interesting open question.

For $d=2$, the limiting spatial profile of radial blow-up solutions to \eqref{1.1} resembles a Dirac mass perturbed by a $L^1$ function, i.e.,
\begin{equation}\label{2d}
u(\cdot,t)\rightharpoonup8\pi\delta_0+f\ \ \text{in}\ C_0(\mathbb{R}^2)^*\ \ \text{as}\ t\to T,
\end{equation}
where $0\le f\in L^1(\mathbb{R}^2)$, see \cite{Herrero-Velázquez-1996,Herrero-Velázquez-1997}. In contrast, for $d\in [3,9]$, as seen from  \eqref{eq:Un},  our result shows that there exist radial solutions of \eqref{1.1} that satisfy
$$
u(x,t)\sim 1/|x|^2,\ \ \ x\ne0,\  \text{as}\ t\to T.
$$
which is quite different from the case $d=2$ in \eqref{2d}.

\begin{remark}[Finite codimensional radial stability]
The stability of self-similar blow-up profiles constructed in \cite{Herrero-1998,Michael}  was established in \cite{{Glogic}, Colllot-Zhang}.  Using the same ideas of  \cite{Colllot-Zhang}, one can also show that
the profiles constructed in Theorem \ref{Main-theorem} are stable along a set of radial initial data with finite Lipschitz codimension equal to the number of unstable eigenmodes.
The non-radial stability of self-similar profiles is still an open problem as far as we know.
\end{remark}
\textbf{Organization of the paper.}
% simplify the notation, we prove the result for $d=3$.
%Our proof can be directly adapted to the dimensions $4\le d\le9$.
In Section \ref{Construction}, we first introduce a key transformation which converts \eqref{self-similar} into a local elliptic equation in $\mathbb{R}^{d+2}$. Then using the method of matched asymptotic expansions, we rigorously derive a sequence of smooth self-similar profiles. In Section \ref{Proofofmainresults}, we give a complete proof for Theorem \ref{Main-theorem}.

\section{Construction of self-similar profiles}\label{Construction}
We start by introducing some notations.

\noindent \textbf{Notation.} We write $a\lesssim b$, if there exists $c>0$ such that $a\le c b$, and $a\sim b$ if simultaneously $a\lesssim b$ and $b\lesssim a$. If the inequality $|f| \le C|g|$ holds for some constant $C>0$, then we write $f=O(g)$.
\subsection{Key results}\label{Preliminaries}
Our main goal is to derive the radial self-similar profile $U(r):=U(|y|)$ which satisfies \eqref{zq11}. To study the nonlocal equation \eqref{zq11}, we introduce the following so-called reduced mass (cf. \cite{Michael}),
\begin{equation}\label{wphi}
   \Phi(r)=\frac{1}{2 r^{d}} \int_{0}^{r} U(s) s^{d-1} ds,
\end{equation}
and transform \eqref{zq11} into a local equation for $\Phi(r)$ satisfying
$$
\partial_{rr}\Phi+\frac{d+1}{r}\partial_{r}\Phi-\Phi-\frac{r\partial_{r}\Phi}{2}+2d\Phi^2+2r\Phi\Phi_r=0.
$$
Clearly, $\Phi(r)$ is the radially symmetric solution of
\begin{equation}\label{hl}
 \Delta \Phi-\frac{1}{2}\Lambda\Phi+2d\Phi^2+y\cdot\nabla(\Phi^2)=0,\ \ \ y\in\mathbb{R}^{d+2},
\end{equation}
with $\Lambda $ being a differential operator defined by
$$
\Lambda u:=2u+y\cdot\nabla u.
$$
%In the radial setting ($|x|=\bar{r}$), i.e., $w(x,t)=\bar{w}(\bar{r},t)$ and $u(x,t)=\bar{u}(\bar{r},t)$, as in \cite{Michael, Colllot-Zhang,Glogic}, we  define the so-called reduced mass as follows:
%\begin{equation}\label{transfrom}
%\bar{w}(\bar{r},t):=\frac{1}{2 \bar{r}^{d}} \int_{0}^{\bar{r}} \bar{u}(\tilde{r},t) \tilde{r}^{d-1} d \tilde{r}.
%\end{equation}
%Then, the system \eqref{1.1} is transformed to a scalar local semilinear heat equation for $w(x,t)$:
%\begin{equation}\label{main1}
%\left\{\begin{aligned}
%&\partial_t w= \Delta w+x\cdot\nabla (w^{2})+2d w^{2}, \\
%&w(0, \cdot)=w_{0},
%\end{aligned} \quad \text { in } \mathbb{R}^{d+2},\right.
%\end{equation}
%where
%$
%w_{0}(x):=\bar{w}_{0}(\bar{r})=\frac{1}{2 \bar{r}^{d}} \int_{0}^{\bar{r}} \bar{u}_{0}(\tilde{r}) \tilde{r}^{d-1} d \tilde{r}.
%$
%By \eqref{gmfs}, the backward self-similar solutions of \eqref{main1} are  of the form
%\begin{equation}\label{wphi}
%w(x,t)=\frac{1}{T-t} \Phi\left(y\right),\ y=\frac{x}{\sqrt{T-t}},\ \Phi(y):=\bar{\Phi}(r)=\frac{1}{2r^{d}} \int_{0}^{r} \tilde{U}(\tilde{r}) \tilde{r}^{d-1} d \tilde{r},
%\end{equation}
%where $\tilde{U}(r):=U(y)$, and
%$\Phi(y)$ is  the self-similar profile of \eqref{main1} satisfying
%\begin{equation}\label{hl}
% \Delta \Phi-\frac{1}{2}\Lambda\Phi+2d\Phi^2+y\cdot\nabla(\Phi^2)=0,\ \ \ y\in\mathbb{R}^{d+2},
%\end{equation}
%with $\Lambda $ being a differential operator defined by
%$$
%\Lambda u:=2u+y\cdot\nabla u.
%$$
By \eqref{explicit1}, for $d\ge1$,
\eqref{hl} admits constant solutions
$
\bar{\Phi}_0=0,\ \bar{\Phi}_1=\frac{1}{2d}.$
By \eqref{explicit2} and \eqref{explicit4}, for $d\ge3$, \eqref{hl} admits explicit radial solutions
\begin{equation}\label{class1}
\bar{\Phi}_2=\frac{1}{|y|^2},\ \bar{\Phi}_3=\frac{2}{2(d-2)+|y|^2}.
\end{equation}
From \eqref{explicit3}, for $d\in[3,9]$,  there exists a countable family of positive smooth radially symmetric solutions $\{\bar{\Phi}_n\}_{n\ge4}$ of \eqref{hl} such that
\begin{equation}\label{class2}
\bar{\Phi}_n\sim\frac{1}{|y|^2}\  \text{as}\ |y|\to +\infty.
\end{equation}
The main result of this paper, as stated in Theorem \ref{Main-theorem} along with Remark \ref{rem}, consists of the construction of a class of more general solutions than those given in \eqref{class1} and \eqref{class2}, but share some similar properties when $0<|y|\ll1$ or $|y|\gg 1$.

The rest of this paper is focused on the case $d=3$ for the simplicity of presentation.
The extension of the result to $d\in[4,9]$\footnote{This oscillating behavior exists when the differential equation $x^2+(d+2)x+4(d-1)=0$ has complex roots, which holds in the case $d\in[3,9].$ } is straightforward since the oscillating behavior of the radial steady-state profile $Q=2d\bar{Q}+2r\partial_r\bar{Q}$ for $d=3$ (see \eqref{steady} for the definition of $\bar{Q}$) also exists for $d\in[4,9]$.  As in \cite{Budd-Norbury, Dancer-Guo-Wei, Collot-Pierre-2019}, the matching of exterior solutions with interior solutions can be obtained by this oscillating behavior.

When $d=3$, equation \eqref{hl} is reduced to
\begin{equation}\label{swin}
    \Delta \Phi-\frac{1}{2}\Lambda \Phi+6\Phi^2+y\cdot\nabla(\Phi^2)=0,\ \ \ \ y\in \mathbb{R}^5.
\end{equation}
Applying the transformation \eqref{wphi}, we then obtain the radially symmetric solution of \eqref{self-similar} as follows
$$U=6 \Phi+2r \partial_r \Phi.$$
We define $$\Phi_*:=\bar{\Phi}_2=\frac{1}{r^2},\ \  \bar{Q}(r)=\frac{1}{2r^3}\int_0^rQ(s)s^2ds,$$
where $Q$ is given by \eqref{steady-equation11}.

The following is the key proposition of this paper, from which  Theorem \ref{Main-theorem} directly follows.
\begin{proposition}\label{swin1}
There exist infinitely many smooth radially symmetric solutions $\Phi_n$ ($n\in\mathbb{N}$) to equation \eqref{swin}.
    Moreover, there exists a sufficiently small constant $r_0>0$ which is independent of $n$ such that the following results hold.\\
    1. (Behavior near the origin). There exists a sequence $\mu_n>0$ with
    $
    \lim \limits_{n\to +\infty}\mu_n=0
    $
    such that
    \begin{equation}\label{is}
        \lim_{n\to+\infty}\sup_{r\le r_0}\left|\Phi_n-\frac{1}{\mu_n^2}\bar{Q}\left(\frac{r}{\mu_n}\right)\right|=0.
    \end{equation}
   2. (Behavior away from the origin).  As $r\geq r_0$, $\Phi_n(r)$ satisfies
   \begin{equation}\label{is1}
   \lim_{n\to+\infty}\sup_{r\ge r_0}(1+r^2)|\Phi_n-\Phi_*|=0.
   \end{equation}
\end{proposition}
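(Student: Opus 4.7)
The plan is to realise the piecewise ansatz announced in Remark~\ref{rem} as an actual smooth solution of \eqref{swin}, by (i) constructing an inner solution concentrated at the origin via a rescaling of the radial steady state $\bar Q$, (ii) constructing an outer solution that is a small perturbation of the explicit singular equilibrium $\Phi_*=1/r^2$, and (iii) choosing the scaling parameter $\mu$ from a discrete family $\mu_n\downarrow 0$ so that the two pieces match at the fixed radius $r_0\ll 1$ in both value and first derivative. Elliptic regularity then upgrades the $C^1$-matched solution to a smooth classical one, and \eqref{is}--\eqref{is1} follow automatically from $\mu_n\to 0$ together with the uniform norm bounds produced by the construction.

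For the inner region I pass to the stretched variable $\rho=r/\mu$ through $\Phi(r)=\mu^{-2}\Psi(\rho)$. Equation \eqref{swin} transforms into
$$\Delta_\rho\Psi-\tfrac{\mu^2}{2}\Lambda_\rho\Psi+6\Psi^2+\rho\,\partial_\rho(\Psi^2)=0,$$
so its $\mu=0$ limit is exactly the local steady-state equation satisfied by $\bar Q$ (which comes from \eqref{steady-equation11} via the reduced mass \eqref{wphi}). Writing $\Psi=\bar Q+\mu^2\tilde Q$, the remainder obeys $\mathcal L_{\bar Q}\tilde Q=\tfrac12\Lambda_\rho\bar Q+\mu^2\mathcal N(\tilde Q,\mu)$, with $\mathcal L_{\bar Q}$ the linearisation of the elliptic part at $\bar Q$ and $\mathcal N$ quadratic in $\tilde Q$. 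The crucial input is the precise tail of $\bar Q$: by \eqref{steady} it equals $1/\rho^2$ plus an oscillatory $\rho^{-5/2}$-correction $\rho^{-5/2}(A\cos(\omega\log\rho)+B\sin(\omega\log\rho))$ produced by the complex roots of the indicial equation $x^2+(d+2)x+4(d-1)=0$, whose imaginary part defines $\omega$. Using this tail I invert $\mathcal L_{\bar Q}$ on the weighted space $\tilde Y_{r_0/\mu}$ of \eqref{jsj} and close a Banach fixed point, obtaining $\tilde Q$ of norm $O(1)$ uniformly in $\mu$ small.

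For the outer region, $\Phi_*$ is itself an exact solution of \eqref{swin}, so I linearise at $\Phi_*$ and write $\Phi=\Phi_*+\varepsilon(\mu)(\mathcal U+\tilde{\mathcal U})$ with $\mathcal U=2d\tilde u_1+2r\partial_r\tilde u_1$ coming from Lemma~\ref{ODE}; this $\mathcal U$ is a distinguished element of the kernel of the linearised operator with the decay at infinity required by \eqref{is1}. The remainder $\tilde{\mathcal U}$ then satisfies a linear equation with source of size $\varepsilon^2$, which I solve by contraction in the space $\tilde X_{r_0}$ of \eqref{swiland}. Matching the two pieces at $r=r_0$ uses the expansion
$$\mu^{-2}\bar Q(r_0/\mu)+\tilde Q(r_0/\mu)=\tfrac{1}{r_0^2}+\mu^{1/2}r_0^{-5/2}\bigl(A\cos(\omega\log(r_0/\mu))+B\sin(\omega\log(r_0/\mu))\bigr)+o(\mu^{1/2})$$
on the inner side, versus $\tfrac{1}{r_0^2}+\varepsilon(\mu)\mathcal U(r_0)+O(\varepsilon^2)$ on the outer side. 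Balancing values and first derivatives forces $\varepsilon(\mu)\asymp\mu^{1/2}$ and reduces to a transcendental system $F(\mu,\log\mu)=0$ whose dominant part is a nontrivial linear combination of $\cos(\omega\log\mu)$ and $\sin(\omega\log\mu)$; since $\log\mu\to-\infty$ as $\mu\to 0^+$, this leading system has infinitely many simple zeros $\mu_n\downarrow 0$, and an implicit function argument absorbing the $o(\mu^{1/2})$ corrections turns each of them into an honest matching parameter.

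The main obstacle will be the combined inner-region analysis and matching. One has to set up the weighted norm $\tilde Y_{r_0/\mu}$ so that $\mathcal L_{\bar Q}$ is invertible with bounds uniform in $\mu$ as $r_0/\mu\to\infty$, track the $\mu^{1/2}$ oscillatory contribution at $r=r_0$ cleanly through the nonlinear correction, and verify that the Jacobian of the leading matching map is non-degenerate in the variable $(\cos(\omega\log\mu),\sin(\omega\log\mu))$ — without this non-degeneracy, the simple zeros of the leading transcendental system could fail to persist under the nonlinear perturbation, and the infinite family $\{\Phi_n\}$ would collapse to only finitely many solutions.
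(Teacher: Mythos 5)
Your proposal follows essentially the same route as the paper: an interior rescaled steady-state ansatz and an exterior perturbation of $\Phi_*$, each closed by a Banach fixed point in the weighted spaces \eqref{jsj} and \eqref{swiland}, then matched at $r=r_0$ first in value (implicit function theorem, giving $\varepsilon(\mu)\sim \mu^{1/2}$) and then in first derivative, where the oscillatory $r^{-5/2}\sin\big(\tfrac{\sqrt7}{2}\log r\big)$ tails coming from the complex indicial roots produce infinitely many matching parameters $\mu_n\downarrow 0$. The non-degeneracy you flag as the main obstacle is precisely what the paper settles by an explicit trigonometric computation: after value-matching, the derivative mismatch has leading term $\lambda^{1/2}\,\frac{c_1c_7\sqrt7}{2u_1(r_0)r_0^{6}}\sin\big(-\tfrac{\sqrt7}{2}\log\lambda+c_8-c_2\big)$ with $c_1,c_7\neq0$, and its sign changes yield the zeros via the intermediate value theorem rather than an implicit function argument.
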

The remainder of this section is devoted to proving the above proposition.

\subsection{Exterior profiles}\label{section3}
The aim of this subsection is to construct a radial solution to \eqref{swin} on $[r_0,+\infty)$, where $0<r_0<1$.
We are initially concerned with the asymptotic behavior of the fundamental solutions for the equation
$
L(u)=0
$ on ($0,+\infty$),
where $L$ is the linearized operator of \eqref{swin} around $\Phi_*$, defined as
\begin{equation}\label{L}
L=-\Delta+\frac{1}{2}\Lambda-2y\cdot\nabla(\Phi_*\cdot)-12\Phi_*.
\end{equation}
Given $0<r_0<1$, we define $X_{r_0}$ as the space of continuous functions on $[r_0,+\infty)$ such
that the following norm is finite
\begin{equation}\label{swiland}
||w||_{X_{r_0}}=\sup_{r_0\le r\le1}(r^{\frac{5}{2}}|w|+r^{\frac{7}{2}}|\partial_rw|)
+\sup_{r\ge 1}\left(r^{4}|w|+r^{5}|\partial_rw|\right).
\end{equation}

\begin{lemma}\label{ODE}
Let $L$ be defined in \eqref{L}. Then the following results hold.
\\
1. The basis of the fundamental solutions:
The equation
$$
L(u)=0\ \ \text{on}\ \ (0,+\infty)
$$
has two fundamental solutions $u_i\ (i=1,2)$ with the following asymptotic behavior as $r\to \infty$:
\begin{equation}\label{sap}
u_1(r)= r^{-2}(1+O(r^{-2})) \quad \mbox{and} \quad u_2(r)=r^{-5}e^{\frac{r^2}{4}}(1+O(r^{-2})),
\end{equation}
and as $r\to 0$:
\begin{equation}\label{origin}
u_1(r)=\frac{c_1\sin(\frac{\sqrt{7}}{2}\log(r)+c_2)}{r^{\frac{5}{2}}}+O(r^{-\frac{1}{2}}) \quad \mbox{and} \quad u_2(r)=\frac{c_3\sin(\frac{\sqrt{7}}{2}\log(r)+c_4)}{r^{\frac{5}{2}}}+O(r^{-\frac{1}{2}}),
\end{equation}
where $c_1$, $c_3\ne0$ and $c_2$, $c_4\in\mathbb{R}$.\\
2. The continuity of the resolvent: The inverse
\begin{equation}\label{zj}
\tau{(f)}=\left(\int_r^{+\infty}fu_2{s}^6e^{-\frac{{s}^2}{4}}ds\right)u_1-\left(\int_r^{+\infty}fu_1{s}^6e^{-\frac{{s}^2}{4}}ds\right)u_2
\end{equation}
satisfies
$
L(\tau{(f)})=f
$
and
\begin{equation}\label{motors}
||\tau{(f)}||_{X_{r_0}}\lesssim \int_{r_0}^{1}|f|s^{\frac{7}{2}}ds+\sup_{r\ge 1}r^{4}|f|.
\end{equation}
\end{lemma}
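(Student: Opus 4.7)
\textbf{Plan for Lemma \ref{ODE}.}

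In radial form (with $d+2=5$), using $\Phi_{*}=r^{-2}$ so that $-2y\cdot\nabla(\Phi_{*}u)=4r^{-2}u-2r^{-1}u'$, the equation $L(u)=0$ reduces to
\[
u''+\Bigl(\tfrac{6}{r}-\tfrac{r}{2}\Bigr)u'+\Bigl(\tfrac{8}{r^{2}}-1\Bigr)u=0.
\]
From this one reads off two features. First, $r=0$ is a regular singular point with indicial polynomial $\alpha^{2}+5\alpha+8$ whose roots are $\alpha_{\pm}=-\tfrac{5}{2}\pm i\tfrac{\sqrt{7}}{2}$. The standard Frobenius construction (iterating the relation $u=r^{\alpha}\sum a_{k}r^{k}$ and taking real and imaginary parts) then yields two real linearly independent solutions whose leading behaviour as $r\to 0$ is of the form $Ar^{-5/2}\sin\bigl(\tfrac{\sqrt{7}}{2}\log r+\varphi\bigr)+O(r^{-1/2})$; the correction jumps by two powers because the next nontrivial terms in the equation ($-\tfrac{r}{2}u'$ and $-u$) are suppressed by $r^{2}$ relative to the Euler part. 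Second, the amplitude $A$ of any nontrivial real solution cannot vanish, since any real combination of $r^{\alpha_{+}}$ and $r^{\alpha_{-}}$ has the form $2\mathrm{Re}(cr^{\alpha_{+}})$, whose leading amplitude is $2|c|$ and vanishes only if $c=0$. This guarantees $c_{1},c_{3}\neq 0$ automatically.

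At $r=\infty$ I will perform two substitutions. For the decaying branch, writing $u_{1}=r^{-2}v_{1}$ reduces the equation to
\[
v_{1}''+\Bigl(\tfrac{2}{r}-\tfrac{r}{2}\Bigr)v_{1}'+\tfrac{2}{r^{2}}v_{1}=0.
\]
Rewriting as a Volterra integral equation for $v_{1}-1$ with kernel built from the integrating factor $e^{-r^{2}/4}r^{2}$, a Banach fixed point in the weighted space $\{w:\sup_{r\geq R}r^{2}|w|<\infty\}$ for large $R$ produces a unique $v_{1}=1+O(r^{-2})$, which extends to $(0,\infty)$ by ODE continuation. For the growing branch, the substitution $u_{2}=r^{-5}e^{r^{2}/4}v_{2}$ yields an analogous equation, and the same contraction argument in a weighted space at infinity gives $v_{2}=1+O(r^{-2})$. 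The precise power $-5$ (and the prefactor $r^{-2}$ for $u_{1}$) is forced by the asymptotic form of the Wronskian $W(u_{1},u_{2})=Cr^{-6}e^{r^{2}/4}$ with $C\neq 0$, obtained by integrating $W'/W=-(\tfrac{6}{r}-\tfrac{r}{2})$.

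For Part 2, the explicit $\tau(f)$ is the variation-of-parameters formula with $1/W\sim s^{6}e^{-s^{2}/4}$, so $L(\tau(f))=f$ follows by direct differentiation together with $W(u_{1},u_{2})s^{6}e^{-s^{2}/4}=C$. To establish \eqref{motors}, I note that
\[
\partial_{r}\tau(f)=\Bigl(\int_{r}^{\infty}fu_{2}s^{6}e^{-s^{2}/4}ds\Bigr)u_{1}'-\Bigl(\int_{r}^{\infty}fu_{1}s^{6}e^{-s^{2}/4}ds\Bigr)u_{2}',
\]
since the boundary contributions cancel, and then bound $|\tau(f)|$ and $|\partial_{r}\tau(f)|$ region by region. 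For $r\in[r_{0},1]$, $|u_{i}(r)|\lesssim r^{-5/2}$ and $|u_{i}'(r)|\lesssim r^{-7/2}$; split $\int_{r}^{\infty}=\int_{r}^{1}+\int_{1}^{\infty}$. On $[r,1]$, $|u_{i}(s)s^{6}e^{-s^{2}/4}|\lesssim s^{7/2}$, giving a contribution $\lesssim \int_{r_{0}}^{1}|f|s^{7/2}ds$; on $[1,\infty)$, the products decay like $e^{-s^{2}/4}$ (for $u_{1}$) and $s$ (for $u_{2}$), and the bound $|f(s)|\leq s^{-4}\sup_{s\geq 1}s^{4}|f|$ renders both integrals finite. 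For $r\geq 1$, $|u_{1}|\sim r^{-2}$, $|u_{2}|\sim r^{-5}e^{r^{2}/4}$, and a direct calculation with $|f(s)|\lesssim s^{-4}\sup_{s\geq 1}s^{4}|f|$ shows both products inside the integral are integrable tails that, after multiplication by $u_{i},u_{i}'$, produce the required $r^{-4}$ and $r^{-5}$ decay.

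The main obstacle is the asymptotic construction at infinity: the coefficients in front of $r^{-2}$ and $r^{-5}e^{r^{2}/4}$ must be shown to be exactly $1$, and the remainder to be $O(r^{-2})$ rather than merely $o(1)$. This is where the careful choice of weighted norm and the Volterra structure of the integral equation are crucial. By contrast, the matching with the oscillatory behaviour near $r=0$ is automatic once $u_{1}$ and $u_{2}$ are extended as solutions of a linear ODE across $(0,\infty)$, since the Frobenius expansion is valid for any solution and the amplitude non-vanishing is forced by the complex conjugate nature of the indicial roots.
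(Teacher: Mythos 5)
Your proposal is correct, but it takes a genuinely different route through Part 1 than the paper does. The paper's proof substitutes $u(r)=z^{-\gamma/2}\phi(z)$ with $z=r^{2}$ and $\gamma^{2}-5\gamma+8=0$, then rescales $\xi=z/4$ to reduce $L(u)=0$ to Kummer's confluent hypergeometric equation $\xi\nu''+(b-\xi)\nu'-a\nu=0$ with $a=1-\gamma/2$, $b=7/2-\gamma$; the asymptotics at $0$ and $\infty$ in \eqref{sap} and \eqref{origin} are then read off from the known expansions of the Kummer function $M(a,b,\xi)$ and the Tricomi function $U(a,b,\xi)$. Your approach works instead directly on the radial ODE $u''+(\tfrac{6}{r}-\tfrac{r}{2})u'+(\tfrac{8}{r^{2}}-1)u=0$ (which you derived correctly, matching the Wronskian ODE the paper also obtains), using a Frobenius expansion at $r=0$ and WKB-type substitutions followed by a Volterra/contraction-mapping argument at $r=\infty$. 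Both approaches recover the same indicial roots $-\tfrac{5}{2}\pm i\tfrac{\sqrt{7}}{2}$, the same Wronskian $W=Cr^{-6}e^{r^{2}/4}$, and the same asymptotics; your route is more self-contained (no special-function theory) at the cost of setting up two separate integral-equation arguments at infinity, whereas the paper's route is shorter because the Kummer/Tricomi expansions are standard. Two minor remarks: first, your concern that the prefactors "must be shown to be exactly $1$" is not really an issue in either proof — a fundamental solution is only determined up to scale, so the normalization is a choice, exactly as it is implicitly in \eqref{large}; second, your Volterra argument for the exponentially growing branch $u_{2}=r^{-5}e^{r^{2}/4}v_{2}$ needs to be set up by integrating forward from a finite base point $R$ rather than inward from $\infty$ (the integrating factor grows like $r^{-4}e^{r^{2}/4}$, so the "$\int_{r}^{\infty}$" formulation used for the decaying branch diverges), but this is a standard modification. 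Your observation that $c_{1},c_{3}\neq 0$ is automatic from the complex-conjugate indicial roots is correct and is a cleaner justification than the paper makes explicit. Part 2 of your plan coincides with the paper's: variation of parameters, splitting $\int_{r}^{\infty}=\int_{r}^{1}+\int_{1}^{\infty}$, and the region-by-region bounds using the pointwise estimates on $u_{1},u_{2},u_{1}',u_{2}'$ in each regime.
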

\begin{proof}
\emph{$\mathbf{Step\ 1}.$}\  Basis of homogeneous solutions.
 We define the changing of variable
\begin{equation}\label{ysnn}
u(r)=\frac{1}{z^{\frac{\gamma}{2}}}\phi(z),\qquad z=r^2,
\end{equation}
where $\gamma$ satisfies
$
-\gamma^2+5\gamma-8=0.
$
From
$$
\partial_r=2r\partial_z,\ \partial_{rr}=4z\partial_{zz}+2\partial_z,\ r\partial_r=2z\partial_z,
$$
one has
\begin{equation}\nonumber
\begin{aligned}
L(u)&=\left(-4z\partial_{zz}-2\partial_z-8\partial_z+z\partial_z+1-8\Phi_*-4\Phi_*z\partial_z\right)\left(\frac{1}{z^{\frac{\gamma}{2}}}\phi(z)\right)\\
&=\frac{1}{z^{\frac{\gamma}{2}}}\bigg\{-4z\phi''(z)+
(4\gamma-14+z)\phi'(z)+\left[1-\frac{\gamma}{2}+\frac{1}{z}(-\gamma^2+5\gamma-8)\right]\phi\bigg\}\\
&=\frac{1}{z^{\frac{\gamma}{2}}}\bigg\{-4z\phi''(z)+
(4\gamma-14+z)\phi'(z)+(1-\frac{\gamma}{2})\phi\bigg\}.
\end{aligned}
\end{equation}
%By
%%$$
%%\partial_y\left(\frac{1}{y^{\frac{\gamma}{2}}}\phi(y)\right)=\frac{1}{y^{\frac{\gamma}{2}}}\phi'(y)-\frac{\gamma}{2y^{\frac{\gamma}{2}+1}}\phi(y),
%%$$
%%$$
%%\partial_{yy}\left(\frac{1}{y^{\frac{\gamma}{2}}}\phi(y)\right)=\frac{\gamma}{2}\left(\frac{\gamma}{2}+1\right)\frac{1}{y^{\frac{\gamma}{2}+2}}\phi(y)-
%\frac{\gamma}{y^{\frac{\gamma}{2}+1}}\phi'(y)+\frac{1}{y^{\frac{\gamma}{2}}}\phi''(y),
%%$$
%%we have
%\begin{equation}\nonumber
%\begin{aligned}
%L(u)&=\frac{1}{y^{\frac{\gamma}{2}}}\bigg\{-4y\phi''(y)+
%(4\gamma-14+y)\phi'(y)+\left[1-%\frac{\gamma}{2}+\frac{1}{y}(-\gamma^2+5\gamma-8)\right]\phi\bigg\}.
%\end{aligned}
%\end{equation}
%Since $\gamma$ satisfies
$
%\gamma^2-5\gamma+8=0,
$
%we have
%$$
%L(u)=\frac{1}{y^{\frac{\gamma}{2}}}\bigg\{-4y\phi''(y)+
%(4\gamma-14+y)\phi'(y)+(1-\frac{\gamma}{2})\phi\bigg\}.
%$$
Let
$
\phi(z)=\nu(\xi)$ and $ \xi=\frac{z}{4}.
$
Then,
$$
L(u)=-\frac{1}{z^{\frac{\gamma}{2}}}\left\{\xi\nu''(\xi)+\left(-\gamma+\frac{7}{2}-\xi\right)\nu'(\xi)+(\frac{\gamma}{2}-1)\nu(\xi)\right\}
.
$$
Therefore, $L(u)=0$ if and only if
\begin{equation}\label{fervex}
\xi\frac{d^2\nu}{d\xi^2}+(b-\xi)\frac{d\nu}{d\xi}-a\nu=0,
\end{equation}
where $$b=\frac{7}{2}-\gamma,\ a=1-\frac{\gamma}{2}.$$ The equation \eqref{fervex} is known as the well studied Kummer's equation (see \cite{Olver-Lozier}). If the parameter $a$ is not a negative integer (which holds in particular for our case), then the fundamental solutions to Kummer's equation consists of the Kummer function $M(a,b,\xi)$ and the Tricomi function $U(a,b,\xi)$. Therefore, $\nu(\xi)$ is a linear combination of the special functions  $M(a,b,\xi)$ and $U(a,b,\xi)$, whose asymptotic profiles at infinity are given by
\begin{equation}\label{large}
    M(a,b,\xi)=\frac{\Gamma(b)}{\Gamma(a)}\xi^{a-b}e^\xi(1+O(\xi^{-1})),\ \ U(a,b,\xi)=\xi^{-a}(1+O(\xi^{-1}))\ \ \text{as}\ \xi\to+\infty.
\end{equation}
Then by \eqref{ysnn} and \eqref{large}, one obtains \eqref{sap}.

For the behavior near the origin, we have
\begin{equation}\label{complex}
    M(a,b,\xi)=1+O(\xi)\ \ \text{as}\ \xi\to0.
\end{equation}
It is easy to check that the real part of $b$ satisfies $\mathcal{R}(b)=1$ ($b\ne1$). Then it follows that
\begin{equation}\label{complex1}
    U(a,b,\xi)=\frac{\Gamma(b-1)}{\Gamma(a)}\xi^{1-b}+\frac{\Gamma(1-b)}{\Gamma(a-b+1)}+O(\xi)\ \ \text{as}\ \xi\to0.
\end{equation}
Since the polynomial
$
\gamma^2-5\gamma+8=0
$
has complex roots
$
   \gamma=\frac{5}{2}\pm\frac{\sqrt{7}i}{2},
$
then combining \eqref{ysnn}, \eqref{complex} and \eqref{complex1}, one obtains \eqref{origin}.
\\
\noindent \emph{$\mathbf{Step\ 2}.$}\  Estimate on the resolvent. The Wronskian
$
W:=u_1'u_2-u_2'u_1
$
satisfies
$
W'=\left(\frac{r}{2}-\frac{6}{r}\right)W$, and $W=\frac{C}{r^6}e^{\frac{r^2}{4}}.
$
We may assume $C=1$ without loss of generality. Next, we solve
$
L(w)=f.$
By the variation of constants, we obtain
$$
w=\left(a_1+\int_r^{+\infty}fu_2{s}^6e^{-\frac{{s}^2}{4}}ds\right)u_1+\left(a_2-\int_r^{+\infty}fu_1{s}^6e^{-\frac{{s}^2}{4}}ds\right)u_2,\ \ a_1,\ a_2\in\mathbb{R}.
$$
Then, $\tau(f)$ satisfies
$
L(\tau(f))=f
$
by choosing $a_1=a_2=0$ in the above.

Next, we estimate the asymptotic behavior of $\tau(f)$. For $r\ge 1$, we have
\begin{equation}\label{asym}
\begin{aligned}
r^{4}|\tau(f)|
&=r^{4}\left|\left(\int_r^{+\infty}fu_2{s}^6e^{-\frac{{s}^2}{4}}ds\right)u_1-\left(\int_r^{+\infty}fu_1{s}^6e^{-\frac{{s}^2}{4}}ds\right)u_2\right|\\
&\lesssim r^2\left(\int_r^{+\infty}|f|{s}ds\right)+r^{-1}e^{\frac{r^2}{4}}\left(\int_r^{+\infty}|f|{s}^{4}e^{-\frac{{s}^2}{4}}ds\right)\\
&\lesssim \sup_{r\ge 1}r^{4}|f|\left\{\left(\int_r^{+\infty}\frac{ds}{s^3}\right)r^2+
r^{-1}e^{\frac{r^2}{4}}\left(\int_r^{+\infty}e^{-\frac{{s}^2}{4}}ds\right)\right\}\\
&\lesssim \sup_{r\ge 1}r^{4}|f|,
\end{aligned}
\end{equation}
and
\begin{equation}\label{asym1}
\begin{aligned}
r^{5}|\partial_r\tau(f)|
&=r^{5}\left|\left(\int_r^{+\infty}fu_2{s}^6e^{-\frac{{s}^2}{4}}ds\right)\partial_ru_1-\left(\int_r^{+\infty}fu_1{s}^6e^{-\frac{{s}^2}{4}}ds\right)\partial_ru_2\right|\\
&\lesssim r^2\left(\int_r^{+\infty}|f|{s}ds\right)+(r^{-1}+r)e^{\frac{r^2}{4}}\left(\int_r^{+\infty}|f|{s}^{4}e^{-\frac{{s}^2}{4}}ds\right)\\
&\lesssim \sup_{r\ge 1}r^{4}|f|\left\{\left(\int_r^{+\infty}\frac{ds}{s^3}\right)r^2+
(r^{-1}+r)e^{\frac{r^2}{4}}\left(\int_r^{+\infty}e^{-\frac{{s}^2}{4}}ds\right)\right\}\\\
&\lesssim \sup_{r\ge 1}r^{4}|f|.
\end{aligned}
\end{equation}
For $r_0\le r\le1$, by \eqref{origin} and \eqref{asym}, we have
\begin{equation}\label{asym2}
\begin{aligned}
r^{\frac{5}{2}}|\tau(f)|&\le r^{\frac{5}{2}}\left|\left(\int_{r}^{1}fu_2{s}^6e^{-\frac{{s}^2}{4}}ds\right)u_1-\left(\int_{r}^{1}fu_1{s}^6e^{-\frac{{s}^2}{4}}ds\right)u_2\right|\\
&\ \ +r^{\frac{5}{2}}\left|\left(\int_1^{+\infty}fu_2{s}^6e^{-\frac{{s}^2}{4}}ds\right)u_1-\left(\int_1^{+\infty}fu_1{s}^6e^{-\frac{{s}^2}{4}}ds\right)u_2\right|\\
&\lesssim \int_{r_0}^{1}|f|s^{\frac{7}{2}}ds+\sup_{r\ge 1}r^{4}|f|.
\end{aligned}
\end{equation}
Similarly, for $r_0\le r\le1$, by \eqref{origin}, \eqref{asym1} and \eqref{asym2}, we have
\begin{equation}\label{asym3}
\begin{aligned}
r^{\frac{7}{2}}|\partial_r\tau(f)|
&=r^{\frac{7}{2}}\left|\left(\int_r^{+\infty}fu_2{s}^6e^{-\frac{{s}^2}{4}}ds\right)\partial_ru_1-\left(\int_r^{+\infty}fu_1{s}^6e^{-\frac{{s}^2}{4}}ds\right)\partial_ru_2\right|\\
&\lesssim r^{\frac{7}{2}}\left|\left(\int_r^{1}fu_2{s}^6e^{-\frac{{s}^2}{4}}ds\right)\partial_ru_1-\left(\int_r^{1}fu_1{s}^6e^{-\frac{{s}^2}{4}}ds\right)\partial_ru_2\right|\\
&\ \ +r^{\frac{5}{2}}\left|\left(\int_1^{+\infty}fu_2{s}^6e^{-\frac{{s}^2}{4}}ds\right)\partial_ru_1-\left(\int_1^{+\infty}fu_1{s}^6e^{-\frac{{s}^2}{4}}ds\right)\partial_ru_2\right|\\
&\lesssim\int_{r_0}^{1}|f|s^{\frac{7}{2}}ds+\sup_{r\ge 1}r^{4}|f|.
\end{aligned}
\end{equation}
Then \eqref{motors} is obtained by combining \eqref{asym}, \eqref{asym1},
\eqref{asym2} and \eqref{asym3}.
\end{proof}

We construct a outer solutions of the self-similar equation in the following.
\begin{proposition}\label{taxi}
Let $0<r_0\ll1$.
For any
$
0<\varepsilon\ll r_0^{\frac{1}{2}},
$
there exists a radial solution to
\begin{equation}\label{self-similar-equation}
    \Delta \Phi-\frac{1}{2}\Lambda \Phi+6\Phi^2+y\cdot \nabla( \Phi^2)=0, \ \ \text{on}\ \ [r_0,+\infty)
\end{equation}
with the form
$$
\Phi=\Phi_*+\varepsilon u_1+\varepsilon w,
$$
with
\begin{equation}\label{swi}
    ||w||_{X_{r_0}}\lesssim\varepsilon r_0^{-\frac{1}{2}},\ \ w|_{\varepsilon=0}=0,
\
||\partial_\varepsilon w||_{X_{r_0}}\lesssim r_0^{-\frac{1}{2}}.
\end{equation}
\end{proposition}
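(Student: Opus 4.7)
The strategy is a Banach fixed point argument for $w$. First I would plug the ansatz $\Phi = \Phi_* + \varepsilon u_1 + \varepsilon w$ into \eqref{self-similar-equation}. Since $\Phi_* = 1/r^2$ is itself a solution of the full nonlinear equation (see \eqref{class1}), expanding $\Phi^2 = \Phi_*^2 + 2\Phi_*(\varepsilon u_1 + \varepsilon w) + (\varepsilon u_1 + \varepsilon w)^2$ and its gradient, and using the definition \eqref{L} of $L$ along with the fact $L(u_1) = 0$ from Lemma \ref{ODE}, I expect the equation to reduce to
$$L(w) = \varepsilon\Bigl[6(u_1 + w)^2 + y\cdot\nabla\bigl((u_1 + w)^2\bigr)\Bigr] =: F(\varepsilon,w).$$
Applying the resolvent $\tau$ from Lemma \ref{ODE} then recasts the problem as the fixed-point equation $w = \tau(F(\varepsilon,w)) =: T(w)$.

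Next I would show that $T$ is a contraction on the closed ball $B_{\delta} = \{w\in X_{r_0} : \|w\|_{X_{r_0}} \le \delta\}$ with $\delta = C\varepsilon r_0^{-1/2}$. The pointwise size of $u_1$ from \eqref{sap}--\eqref{origin} gives $|u_1| \lesssim r^{-5/2}$ and $|\partial_r u_1| \lesssim r^{-7/2}$ on $[r_0,1]$, and $|u_1| \lesssim r^{-2}$, $|\partial_r u_1| \lesssim r^{-3}$ on $[1,\infty)$. Combined with the definition \eqref{swiland} of $\|\cdot\|_{X_{r_0}}$, this yields $|u_1 + w| + r|\partial_r(u_1 + w)| \lesssim r^{-5/2}$ on $[r_0,1]$ and $\lesssim r^{-2}$ on $[1,\infty)$, so
$$|F(\varepsilon,w)| \lesssim \varepsilon r^{-5}\ \text{for}\ r_0 \le r \le 1, \qquad |F(\varepsilon,w)| \lesssim \varepsilon r^{-4}\ \text{for}\ r \ge 1.$$
Feeding this into the resolvent bound \eqref{motors} gives
$$\|T(w)\|_{X_{r_0}} \lesssim \varepsilon\int_{r_0}^{1} s^{-3/2}\,ds + \varepsilon \lesssim \varepsilon r_0^{-1/2},$$
so $T$ maps $B_{\delta}$ into itself. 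Writing $F(\varepsilon,w_1) - F(\varepsilon,w_2) = \varepsilon(w_1 - w_2)\cdot(\text{linear factors in } u_1, w_1, w_2)$ and applying the same weighted estimates gives $\|T(w_1) - T(w_2)\|_{X_{r_0}} \lesssim \varepsilon r_0^{-1/2}\|w_1 - w_2\|_{X_{r_0}}$, which under the smallness assumption $\varepsilon \ll r_0^{1/2}$ produces a strict contraction. Banach's fixed point theorem then yields a unique $w(\varepsilon) \in B_{\delta}$ satisfying the first two assertions of \eqref{swi}. The differentiability of $w$ in $\varepsilon$ (with $w|_{\varepsilon=0}=0$) and the bound $\|\partial_\varepsilon w\|_{X_{r_0}} \lesssim r_0^{-1/2}$ follow by implicit differentiation of the fixed-point identity and absorbing the $\varepsilon$-factor of $\partial_w F$ into the left-hand side.

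The principal obstacle, and the reason behind the somewhat exotic weights in \eqref{swiland}, is the oscillatory singular behavior of $u_1 \sim c_1 r^{-5/2}\sin(\tfrac{\sqrt 7}{2}\log r + c_2)$ recorded in \eqref{origin}. The weights $r^{5/2}$ on $|w|$ and $r^{7/2}$ on $|\partial_r w|$ are calibrated exactly to this asymptotic, so that the quadratic nonlinearity $6\psi^2 + y\cdot\nabla(\psi^2)$ with $\psi = u_1 + w$ scales like $r^{-5}$ at the origin and is just barely integrable against the weight $s^{7/2}$ appearing in the resolvent bound \eqref{motors}. This integration produces precisely one power $r_0^{-1/2}$ of loss, which is compensated by the factor $\varepsilon$ under the hypothesis $\varepsilon \ll r_0^{1/2}$; any coarser choice of weights, or a weaker estimate on $\tau$, would fail to close the contraction.
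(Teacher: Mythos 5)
Your proposal is correct and follows essentially the same route as the paper: substitute $\Phi=\Phi_*+\varepsilon(u_1+w)$, use $L(u_1)=0$ to reduce to $L(w)=\varepsilon\bigl[6(u_1+w)^2+r\partial_r(u_1+w)^2\bigr]$, invert via $\tau$, and close the contraction in the weighted space $X_{r_0}$ using the pointwise bounds $|u_1+w|+r|\partial_r(u_1+w)|\lesssim r^{-5/2}$ on $[r_0,1]$ and $\lesssim r^{-2}$ on $[1,\infty)$, which make $\int_{r_0}^1|G|s^{7/2}\,ds\lesssim r_0^{-1/2}$ the single source of loss. Your treatment of $\partial_\varepsilon w$ via implicit differentiation and absorbing the $O(\varepsilon r_0^{-1/2})$ linear term is, if anything, slightly more careful than the paper's terse formula $\partial_\varepsilon w=\tau(G[u_1]w)$, but both yield the same bound.
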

\begin{proof}
\textbf{Step 1.} Fixed point argument. Let
$
\Phi=\Phi_*+\varepsilon v
$
satisfy \eqref{self-similar-equation} for $r\ge r_0$. Then
$$
L(v)=\varepsilon(y\cdot\nabla(v^2)+6v^2).
$$
We set
$
v=u_1+w.
$
Since $L(u_1)=0$, then $w$ satisfies
$$
L(w)=\varepsilon (y\cdot\nabla(u_1+w)^2+6(u_1+w)^2),\ \ \forall\ r\ge r_0.
$$
Next, we find the solution of
\begin{equation}\label{eq}
   w=\varepsilon\tau(G[u_1]w),
\end{equation}
where $\tau(f)$ is defined in \eqref{zj} and
$$
G[u_1]w=r\partial_r(u_1+w)^2+6(u_1+w)^2.
$$
We claim the following estimates: if
$
||w_i||_{X_{r_0}}\le1,\ i=1,2,
$
then
\begin{equation}\label{fixed-point}
   \int_{r_0}^1 |G[u_1]w_i|s^{\frac{7}{2}}ds+\sup_{r\ge1}r^4|G[u_1]w_i|\lesssim r_0^{-\frac{1}{2}},\ \ i=1,2,
\end{equation}
and
\begin{equation}\label{fixed-point1}
\int_{r_0}^1 |G[u_1]w_1-G[u_1]w_2|s^{\frac{7}{2}}ds+\sup_{r\ge1}r^4|G[u_1]w_1-G[u_1]w_2|\lesssim r_0^{-\frac{1}{2}}||w_1-w_2||_{X_{r_0}}.
\end{equation}
If  $\varepsilon r_0^{-\frac{1}{2}}\ll1$,  and \eqref{fixed-point}-\eqref{fixed-point1} hold, by the continuity estimate on the resolvent \eqref{motors} and the Banach fixed theorem, there exists a unique solution to \eqref{eq} with
$
||w||_{X_{r_0}}\lesssim \varepsilon r_0^{-\frac{1}{2}}.
$
We know from \eqref{eq} that $w|_{\varepsilon=0}=0$ and
$
\partial_\varepsilon w=\tau(G[u_1]w).
$
Then by \eqref{motors} and \eqref{fixed-point}, we get
$$
||\partial_\varepsilon w|||_{X_{r_0}}=||\tau(G[u_1]w)||_{X_{r_0}}\lesssim r_0^{-\frac{1}{2}}.
$$
{\textbf{Step 2.}} Proof of estimates \eqref{fixed-point} and \eqref{fixed-point1}.
By \eqref{origin} and the definition of $X_{r_0}$ in \eqref{swiland}, for $w\in X_{r_0}$ and $r_0\le r\le1$, we have
\begin{equation}\label{pf}
|w(r)|+|u_1(r)|+|r\partial_r(w+u_1)|\lesssim r^{-\frac{5}{2}},
\end{equation}
while for $r\ge1$,
\begin{equation}\label{pf1}
|w(r)|+|u_1(r)|+|r\partial_r(w+u_1)|\lesssim r^{-2}.
\end{equation}
Next, we prove \eqref{fixed-point}. For $r_0\le r\le1$, by \eqref{pf}, we have
\begin{equation}\label{a}
\begin{aligned}
\int_{r_0}^1 |G[u_1]w|s^{\frac{7}{2}}ds&=\int_{r_0}^1\bigg(|s\partial_{s}(u_1+w)^2|
+6(u_1+w)^2\bigg)s^{\frac{7}{2}}ds\lesssim\int_{r_0}^1s^{-\frac{3}{2}}ds\lesssim r_0^{-\frac{1}{2}}.
\end{aligned}
\end{equation}
For $r\ge1$, by \eqref{pf1}, we have
$
|G[u_1]w|=r\partial_r(u_1+w)^2+6(u_1+w)^2\lesssim r^{-4},
$
and hence
\begin{equation}\label{a1}
\sup_{r\ge1}r^4|G[u_1]w|\lesssim1.
\end{equation}
We conclude the proof of \eqref{fixed-point} by \eqref{a} and \eqref{a1}.

Next, we prove \eqref{fixed-point1}. For
$
w_i\in X_{r_0}\ (i=1,2),
$
we have
\begin{equation}\nonumber
\begin{aligned}
G[u_1]w_1-G[u_1]w_2=r\partial_r[(2u_1+w_1+w_2)(w_1-w_2)]+6(2u_1+w_1+w_2)(w_1-w_2).
\end{aligned}
\end{equation}
For $r\ge1$, by \eqref{sap} and the definition of $X_{r_0}$ in \eqref{swiland}, we get
\begin{equation}\label{x}
    |6(2u_1+w_1+w_2)(w_1-w_2)|\lesssim |w_1-w_2|,
\end{equation}
and
\begin{equation}\label{x2}
(r\partial_r+1)(2u_1+w_1+w_2)\lesssim1.
\end{equation}
By \eqref{x2}, we obtain
\begin{equation}\nonumber
\begin{aligned}
&r\partial_r[(2u_1+w_1+w_2)(w_1-w_2)]\\
&=[r\partial_r(2u_1+w_1+w_2)](w_1-w_2)+[r\partial_r(w_1-w_2)](2u_1+w_1+w_2)\\
&\lesssim |w_1-w_2|+ r\partial_r|w_1-w_2|.
\end{aligned}
\end{equation}
Then combining \eqref{x}, we have
\begin{equation}\nonumber
\begin{aligned}
|G[u_1]w_1-G[u_1]w_2|&\le  |6(2u_1+w_1+w_2)(w_1-w_2)|+|r\partial_r[(2u_1+w_1+w_2)(w_1-w_2)]|\\
&\lesssim r\partial_r|w_1-w_2|+|w_1-w_2|,
\end{aligned}
\end{equation}
and hence
\begin{equation}\label{win}
 \sup_{r\ge1}r^4|G[u_1]w_1-G[u_1]w_2|\lesssim ||w_1-w_2||_{X_{r_0}}.
\end{equation}
For $r_0\le r\le1$, we have
$$
(r\partial_r+1)|2u_1+w_1+w_2|\lesssim r^{-\frac{5}{2}},
$$ and hence
\begin{equation}\nonumber
\begin{aligned}
&r\partial_r[(2u_1+w_1+w_2)(w_1-w_2)]\\
&=[r\partial_r(2u_1+w_1+w_2)](w_1-w_2)+[r\partial_r(w_1-w_2)](2u_1+w_1+w_2)\\
&\lesssim r^{-\frac{5}{2}}(|w_1-w_2|+r|\partial_r(w_1-w_2)|).
\end{aligned}
\end{equation}
Then it follows that
\begin{equation}\label{win1}
\begin{aligned}
&\int_{r_0}^1 |G[u_1]w_1-G[u_1]w_2|s^{\frac{7}{2}}ds\\
&\lesssim\int_{r_0}^1\left\{|s\partial_{s}[(2u_1+w_1+w_2)(w_1-w_2)]|+6|(2u_1+w_1+w_2)(w_1-w_2)|\right\}s^{\frac{7}{2}}ds\\
&\lesssim\int_{r_0}^1\left\{s^{-\frac{5}{2}}(s|\partial_s(w_1-w_2)|+|w_1-w_2|)\right\}s^{\frac{7}{2}}ds\\
&\lesssim\sup_{r_0\le r\le1}(r^{\frac{5}{2}}|w_1-w_2|+r^{\frac{7}{2}}|\partial_r(w_1-w_2)|)\int_{r_0}^1s^{-\frac{3}{2}}ds\lesssim r_0^{-\frac{1}{2}}||w_1-w_2||_{X_{r_0}}.
\end{aligned}
\end{equation}
Combining \eqref{win} and \eqref{win1}, this conclude the proof of \eqref{fixed-point1}.
\end{proof}

\subsection{Interior profiles}\label{INTERIOR}
The purpose of this subsection is to construct a radial solution of \eqref{swin} on $[0,r_0]$, where $0<r_0\ll1$ is given in Proposition \ref{taxi}.
We define
\begin{equation}\label{ALM}
    \bar{Q}(r)=\frac{1}{2r^3}\int_0^rQ(s)s^2ds.
\end{equation}
 By \eqref{steady-equation11},  $\bar{Q}$ satisfies
\begin{equation}\label{steady-equation}
\left\{\begin{aligned}
&\partial_{rr} \bar{Q}+\frac{4}{r}\partial_r\bar{Q}+6\bar{Q}^2+r\partial_r(\bar{Q}^2)=0, \\
&\bar{Q}(0)=\frac{1}{6},\ \ \bar{Q}'(0)=0.
\end{aligned} \right.
\end{equation}
We  define the linearized operators of  \eqref{steady-equation} at $\Phi_*$ and $\bar{Q}$, respectively,  by the following expressions:
\begin{equation}\label{Operators}
 H_\infty:=-\partial_{rr}-\frac{4}{r}\partial_r-12\Phi_*-2r\partial_r(\Phi_*\cdot),\
H:=-\partial_{rr}-\frac{4}{r}\partial_r-12\bar{Q}-2r\partial_r(\bar{Q}\cdot).
\end{equation}
We define $Y$ as the space of continuous functions on $[1,+\infty)$ such that the following norm is finite
$$
||w||_Y=\sup_{r\ge1}(r^3|w|+r^4|\partial_rw|).
$$
\begin{lemma}
The equation
$$
H_\infty(\phi)=0,\ \ \text{on}\ (0,+\infty),
$$
has two fundamental solutions
\begin{equation}\label{basis}
    \phi_1=\frac{\sin(\frac{\sqrt{7}}{2}\log(r))}{r^{\frac{5}{2}}},\ \ \phi_2=\frac{\cos(\frac{\sqrt{7}}{2}\log(r))}{r^{\frac{5}{2}}}.
\end{equation}
In addition, the inverse
\begin{equation}\label{zj1}
\psi(f)=\phi_1\int_r^{+\infty}f\phi_2\frac{2s^6}{\sqrt{7}}ds-\phi_2\int_r^{+\infty}f\phi_1\frac{2s^6}{\sqrt{7}}ds
\end{equation}
satisfies
$
H_\infty(\psi(f))=f
$
and
\begin{equation}\label{basis1}
||\psi(f)||_Y\lesssim\sup_{r\ge 1}r^{5}|f|.
\end{equation}
\end{lemma}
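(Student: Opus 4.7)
My plan is to reduce the equation $H_\infty(\phi)=0$ to an Euler equation, read off the two fundamental solutions, and then derive the resolvent formula by the classical variation-of-parameters method, after which the bound \eqref{basis1} follows from the explicit decay rates of $\phi_1,\phi_2$.

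First, I would substitute $\Phi_*(r)=r^{-2}$ into the operator $H_\infty$ defined in \eqref{Operators}. A careful application of the product rule to $2r\partial_r(\Phi_* u)=-4r^{-2}u+2r^{-1}\partial_r u$, combined with the term $-12\Phi_* u=-12r^{-2}u$ and the Laplacian part, collapses everything to
$$H_\infty u=-u''-\tfrac{6}{r}u'-\tfrac{8}{r^2}u,$$
so that $H_\infty(\phi)=0$ becomes the Euler equation $u''+\tfrac{6}{r}u'+\tfrac{8}{r^2}u=0$. The ansatz $u=r^\alpha$ gives the indicial equation $\alpha^2+5\alpha+8=0$ with complex roots $\alpha=-\tfrac{5}{2}\pm i\tfrac{\sqrt{7}}{2}$, which immediately yields the real basis $\phi_1,\phi_2$ stated in \eqref{basis}.

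Next, I would compute the Wronskian $W=\phi_1\phi_2'-\phi_1'\phi_2$. Either by Abel's formula applied to the coefficient $6/r$, or by direct computation at $r=1$ (where $\phi_1(1)=0$, $\phi_2(1)=1$, $\phi_1'(1)=\tfrac{\sqrt{7}}{2}$, $\phi_2'(1)=-\tfrac{5}{2}$), I obtain $W(r)=-\tfrac{\sqrt{7}}{2}r^{-6}$. With this in hand, variation of parameters applied to $u''+\tfrac{6}{r}u'+\tfrac{8}{r^2}u=-f$ produces the candidate
$$\psi(f)=\phi_1\int_r^{+\infty}\frac{2s^6}{\sqrt{7}}f\phi_2\,ds-\phi_2\int_r^{+\infty}\frac{2s^6}{\sqrt{7}}f\phi_1\,ds,$$
where the integrals are taken from $r$ to $+\infty$ so the antiderivatives vanish at infinity. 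A direct differentiation and verification using $\phi_1\phi_2-\phi_2\phi_1=0$ (which makes the boundary terms cancel) then confirms $H_\infty(\psi(f))=f$.

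Finally, I would prove the estimate \eqref{basis1}. Writing $M=\sup_{r\ge 1}r^5|f|$, the pointwise bounds $|\phi_i(r)|\le r^{-5/2}$ and $|\phi_i'(r)|\lesssim r^{-7/2}$ (immediate from the explicit formulas, since $\sin,\cos$ are bounded) give
$$\Big|\int_r^{+\infty}s^6\phi_i(s)f(s)\,ds\Big|\lesssim M\int_r^{+\infty}s^{6-5/2-5}\,ds=M\int_r^{+\infty}s^{-3/2}\,ds\lesssim Mr^{-1/2},\qquad r\ge 1.$$
Multiplying by $|\phi_i(r)|\lesssim r^{-5/2}$ yields $|\psi(f)|\lesssim Mr^{-3}$, while multiplying by $|\phi_i'(r)|\lesssim r^{-7/2}$ in the differentiated expression (after the cancellation of the boundary terms at $s=r$) yields $|\partial_r\psi(f)|\lesssim Mr^{-4}$. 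Combined, these give $\|\psi(f)\|_Y\lesssim M$, which is precisely \eqref{basis1}. The whole argument is essentially routine; the only place requiring care is the sign/constant bookkeeping in deriving $W$ and the factor $2/\sqrt{7}$ in the resolvent formula, which is where I would double-check the calculation against the statement.
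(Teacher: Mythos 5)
Your proposal is correct and follows essentially the same route as the paper: reduce $H_\infty$ with $\Phi_*=r^{-2}$ to the Euler equation with indicial polynomial $k^2+5k+8$, compute the Wronskian $\sim r^{-6}$, obtain $\psi(f)$ by variation of parameters with integrals from $r$ to $+\infty$, and prove \eqref{basis1} from $|\phi_i|\lesssim r^{-5/2}$, $|\phi_i'|\lesssim r^{-7/2}$ and the bound $\int_r^{+\infty}|f|s^{7/2}\,ds\lesssim r^{-1/2}\sup_{s\ge1}s^5|f|$. The only cosmetic difference is your opposite sign convention for the Wronskian, which does not affect the final formula or estimates.
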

\begin{proof}
Let $\phi=r^k$, by $\Phi_*=\frac{1}{r^2}$, we have
$$
H_\infty(\phi)=-r^{k-2}(k^2+5k+8).
$$
Since the polynomial
$
k^2+5k+8=0
$
has two complex roots
$
k=\frac{-5\pm\sqrt{7}i}{2},
$
 the equation
$
H_\infty(\phi)=0
$
admits two explicit fundamental solutions
\begin{equation}\label{awc}
    \phi_1=\frac{\sin(\frac{\sqrt{7}}{2}\log(r))}{r^{\frac{5}{2}}},\ \ \phi_2=\frac{\cos(\frac{\sqrt{7}}{2}\log(r))}{r^{\frac{5}{2}}},
\end{equation}
and the corresponding Wronskian is given by
$
W(r)=\phi_1'\phi_2-\phi_2'\phi_1=\frac{\sqrt{7}}{2r^6}.
$
By the variation of constants, the solutions of equation
$
H_\infty(u)=f
$
are given by
\begin{equation}\label{sa}
u=\left(a_{1,0}+\int_r^{+\infty}f\phi_2\frac{2s^6}{\sqrt{7}}ds\right)\phi_1+
\left(a_{2,0}-\int_r^{+\infty}f\phi_1\frac{2s^6}{\sqrt{7}}ds\right)\phi_2,\ \ a_{1,0},\  a_{2,0}\in\mathbb{R}.
\end{equation}
Hence $$\psi(f)=\phi_1\int_r^{+\infty}f\phi_2\frac{2s^6}{\sqrt{7}}ds-\phi_2\int_r^{+\infty}f\phi_1\frac{2s^6}{\sqrt{7}}ds$$ satisfies $H_\infty(\psi(f))=f$ by choosing $a_{1,0}=a_{2,0}=0$ in \eqref{sa}.
For $r\ge1$, from \eqref{awc}, we have
\begin{equation}\label{so1}
\begin{aligned}
r^{3}|\psi(f)|&=r^{3}\left|\left(\int_r^{+\infty}f\phi_2\frac{2s^6}{\sqrt{7}}ds\right)\phi_1-\left(\int_r^{+\infty}f\phi_1\frac{2s^6}{\sqrt{7}}ds\right)\phi_2\right|\\
&\lesssim r^{\frac{1}{2}}\left(\int_r^{+\infty}|f|{s^{\frac{7}{2}}}ds\right)\lesssim \left(r^{\frac{1}{2}}\int_r^{+\infty}s^{-\frac{3}{2}}ds\right)\sup_{r\ge 1}r^{5}|f|\lesssim \sup_{r\ge 1}r^{5}|f|,
\end{aligned}
\end{equation}
and
\begin{equation}\label{sow}
\begin{aligned}
r^{4}|\partial_r\psi(f)|
&=r^{4}\left|\left(\int_r^{+\infty}f\phi_2\frac{2s^6}{\sqrt{7}}ds\right)\partial_r\phi_1-\left(\int_r^{+\infty}f\phi_1\frac{2s^6}{\sqrt{7}}ds\right)\partial_r\phi_2\right|\\
&\lesssim r^{\frac{1}{2}}\left(\int_r^{+\infty}|f|{s^{\frac{7}{2}}}ds\right)\lesssim \left(r^{\frac{1}{2}}\int_r^{+\infty}s^{-\frac{3}{2}}ds\right)\sup_{r\ge 1}r^{5}|f|\lesssim \sup_{r\ge 1}r^{5}|f|.
\end{aligned}
\end{equation}
We conclude the proof of \eqref{basis1} by \eqref{so1} and \eqref{sow}.
\end{proof}

\begin{lemma}
The asymptotic profile of $\bar{Q}$ as $r\to+\infty$ is
\begin{equation}\label{steady}
 \bar{Q}(r)=\Phi_*+\frac{c_5\sin(\frac{\sqrt{7}}{2}\log(r)+c_6)}{r^{\frac{5}{2}}}+O\left({r^{-3}}\right),
\end{equation}
where $c_5\ne0$ and $c_6\in\mathbb{R}$.
\end{lemma}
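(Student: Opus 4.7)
The plan is to use a logarithmic change of variables to reduce \eqref{steady-equation} to an autonomous second-order ODE and then read off the asymptotic from the structure of the equilibria. Setting $r = e^{t}$ and $\bar Q(r) = r^{-2} w(t)$, a direct computation converts \eqref{steady-equation} into
\begin{equation*}
w_{tt} + (2w - 1)\, w_t + 2 w(w - 1) = 0.
\end{equation*}
The explicit profile $\Phi_{*} = r^{-2}$ corresponds to the equilibrium $w \equiv 1$, while the condition $\bar Q(0) = 1/6$ forces $w(t) \sim \tfrac{1}{6} e^{2 t}$ as $t \to -\infty$, so the trajectory leaves the saddle at $w = 0$ (unstable eigenvalue $2$) along its unstable direction. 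Writing $w = 1 + v$, the linearization at $w = 1$ becomes
\begin{equation*}
v_{tt} + v_t + 2 v = -\, 2 v\, v_t - 2 v^{2},
\end{equation*}
whose characteristic polynomial $\lambda^{2} + \lambda + 2 = 0$ has the complex pair $\lambda_{\pm} = -\tfrac{1}{2} \pm \tfrac{i\sqrt{7}}{2}$. Hence $w = 1$ is a stable spiral, and the decaying homogeneous solutions $\Psi_{1}(t) = e^{-t/2}\sin(\tfrac{\sqrt{7}}{2} t)$, $\Psi_{2}(t) = e^{-t/2}\cos(\tfrac{\sqrt{7}}{2} t)$ match, after $t = \log r$, the basis $\phi_{1,2}$ in \eqref{basis}.

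First I would show that $w(t) \to 1$ as $t \to +\infty$ along the heteroclinic leaving the saddle. The functional $E(w, w_t) = \tfrac{1}{2} w_t^{2} + \tfrac{2}{3} w^{3} - w^{2}$ satisfies $\tfrac{d}{dt} E = -(2w - 1)\, w_t^{2}$, which is dissipative once $w > 1/2$; combined with simple barrier arguments ruling out escape to $w = +\infty$, this confines the trajectory to a bounded region in $(w, w_t)$, and via LaSalle together with the local stable-spiral structure at $w = 1$, gives convergence to the unique interior equilibrium.

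Once convergence is secured, the sharp asymptotic comes from a Duhamel representation on the stable manifold of $w = 1$. Writing the nonlinear source as $N(v, v_t) = -2 v v_t - 2 v^{2}$ and applying variation of constants integrated from $+\infty$ against the Green's function of the stable linearization produces
\begin{equation*}
v(t) = A\, \Psi_{1}(t) + B\, \Psi_{2}(t) + \mathcal{R}(t),
\end{equation*}
where the Green's function has size $e^{-(s-t)/2}$ for $s \geq t$ while $N = O(e^{-t})$; integrating yields $\mathcal{R}(t) = O(e^{-t})$. Undoing the substitution via $\bar Q - \Phi_{*} = r^{-2} v$ and collapsing $(A, B)$ into an amplitude-phase pair $(c_{5}, c_{6})$ produces exactly \eqref{steady}.

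The main obstacle is showing $c_{5} \neq 0$. My plan is to argue by contradiction: if $c_{5} = 0$ then $v(t) = O(e^{-t})$, which makes the source $N(v, v_t) = O(e^{-2t})$. Reinserting into the stable-subspace variation-of-constants formula (integrated from $+\infty$) and using that every nontrivial stable mode of the linearization decays at exactly the rate $e^{-t/2}$, with no faster-decaying mode available, forces both mode amplitudes $A$ and $B$ to vanish. ODE uniqueness then gives $v \equiv 0$ on some half-line and thence everywhere, i.e., $\bar Q \equiv \Phi_{*}$ on $(0, +\infty)$. This contradicts the smoothness condition $\bar Q(0) = 1/6 \neq \Phi_{*}(0) = +\infty$, so $c_{5} \neq 0$ and the lemma follows.
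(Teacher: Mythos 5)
Your route is genuinely different from the paper's: you pass to the autonomous system in $t=\log r$, $w=r^2\bar Q$, and your algebra checks out — the equation $w_{tt}+(2w-1)w_t+2w(w-1)=0$, the saddle at $w=0$ (eigenvalues $2,-1$, consistent with $\bar Q(0)=1/6$ giving $w\sim\frac16 e^{2t}$), the focus at $w=1$ with $\lambda^2+\lambda+2=0$, and the identity $\frac{d}{dt}E=-(2w-1)w_t^2$ are all correct, and the translation back to $r^{-5/2}\sin(\frac{\sqrt7}{2}\log r+c_6)$ with an $O(r^{-3})$ remainder is the right bookkeeping. The paper instead performs a perturbative fixed-point construction near $r=\infty$ around $\Phi_*+\varepsilon\phi_1$ (assuming the ansatz $\bar Q=\Phi_*+\varepsilon v$), so your plan has the merit of working with the actual trajectory launched from the origin and of addressing $c_5\neq 0$ head-on; for that last step, note that "ODE uniqueness" is not quite the right tool — what you need is the standard lower bound near a hyperbolic equilibrium ($\frac{d}{dt}|V|^2\geq -C|V|^2$ for $V=(v,v_t)$ small), which shows a nontrivial solution cannot decay superexponentially, and only then do you contradict $\bar Q\not\equiv\Phi_*$.

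The genuine gap is the convergence step $w(t)\to 1$ as $t\to+\infty$, which is where the real work of this lemma lies and which your sketch does not secure. LaSalle requires a compact positively invariant set on which $E$ is nonincreasing, but $\dot E=-(2w-1)w_t^2$ has the wrong sign precisely where your orbit starts (it leaves the saddle with $w$ small, so $E$ initially increases), and nothing you have said prevents the orbit from re-entering $\{w<1/2\}$ during its oscillations; so as stated the invariance principle does not apply. If you instead aim for Poincar\'e--Bendixson, you must (i) produce an actual forward-invariant bounded region — ruling out not only $w\to+\infty$ but also the orbit crossing into $w<0$ and escaping, and ruling out the $\omega$-limit set containing the saddle $(0,0)$ — and (ii) exclude periodic orbits, which is not immediate here: the divergence of the vector field is $1-2w$ and changes sign, so Bendixson--Dulac does not apply directly, and the energy identity $\oint(2w-1)w_t^2\,dt=0$ on a putative cycle is not by itself a contradiction. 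This heteroclinic (saddle-to-focus) connection is exactly the classical Chandrasekhar/Joseph--Lundgren-type spiral analysis, and it needs a genuine nullcline/invariant-region argument (or a citation to the known phase-plane analysis of this steady state, e.g.\ in the spirit of \cite{Michael}); without it the lemma is not proved by your outline.
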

\begin{proof}
%{\textbf{Step 1.}} Setting the Banach fixed point.
Assume that
\begin{equation}\label{steady00o}
\bar{Q}=\Phi_*+\varepsilon v
\end{equation}
solves \eqref{steady-equation} on $[1,\infty)$. Then $v$ satisfies
$
H_\infty(v)=\varepsilon(6v^2+r\partial_rv^2).
$
Let $v=\phi_1+w$, by $H_\infty(\phi_1)=0$, we have
$
H_\infty(w)=\varepsilon(6(\phi_1+w)^2+r\partial_r(\phi_1+w)^2).
$
We define $$G[\phi_1](w)=6(\phi_1+w)^2+r\partial_r(\phi_1+w)^2.$$
Next, we look for the solution of
\begin{equation}\label{so}
 w=\varepsilon\psi(G[\phi_1](w)),
\end{equation}
where $\psi(f)$ is defined in \eqref{zj1}. We claim that, if $w\in Y$, then
\begin{equation}\label{con}
    \sup_{r\ge1}r^5|G[\phi_1](w)|\lesssim1,
\end{equation}
and for $w_1$, $w_2\in Y$, it holds that
\begin{equation}\label{con1}
  \sup_{r\ge1}r^5|G[\phi_1](w_1)-G[\phi_1](w_2)|\lesssim ||w_1-w_2||_{Y}.
\end{equation}
If the above claim holds, for $\varepsilon>0$ small enough, by the resolvent estimate \eqref{basis1}  and the Banach fixed point theorem, there exists a unique solution $w\in Y$ to \eqref{so} and hence we find a $v$ for \eqref{steady00o}. Finally we get \eqref{steady} by \eqref{steady00o}.

It remains to show estimates \eqref{con} and \eqref{con1}.
By \eqref{basis} and  the definition of the space $Y$,
for $r\ge1$  and $w\in Y$, we have
\begin{equation}\nonumber
\begin{aligned}
r^{5}|G[\phi_1](w)|&=r^5\{6(\phi_1+w)^2+r\partial_r(\phi_1+w)^2\}\\
&\lesssim r^{5}[(\phi_1+w+2r\partial_r(\phi_1+w))(\phi_1+w)]\\
&\lesssim r^{5}(r^{-5}+r^{-6}+r^{-\frac{11}{2}})\lesssim 1.
\end{aligned}
\end{equation}
For $r\ge1$ and $w_i\in Y$ $(i=1,2)$, by \eqref{basis}   and the definition of the space $Y$, we get
$$
|w_1+w_2+2\phi_1|\lesssim r^{-\frac{5}{2}},\ \ |r\partial_r(w_1+w_2+2\phi_1)|\lesssim r^{-\frac{5}{2}}.
$$
Hence we have
\begin{equation}\nonumber
\begin{aligned}
&|G[\phi_1](w_1)-G[\phi_1](w_2)|\\
&=|6(w_1+w_2+2\phi_1)(w_1-w_2)+r\partial_r[(w_1+w_2+2\phi_1)(w_1-w_2)]|\\
&\lesssim r^{-\frac{5}{2}}|w_1-w_2|+|r\partial_r(w_1+w_2+2\phi_1)||w_1-w_2|+|r\partial_r(w_1-w_2)||w_1+w_2+2\phi_1|\\
&\lesssim r^{-\frac{5}{2}}(|w_1-w_2|+|r\partial_r(w_1-w_2)|),
\end{aligned}
\end{equation}
and
\begin{equation}\nonumber
\begin{aligned}
r^{5}|G[\phi_1](w_1)-G[\phi_1](w_2)|&\lesssim r^5(r^{-\frac{5}{2}}|w_1-w_2|+r^{-\frac{5}{2}}|r\partial_r(w_1-w_2)|)\\
&=r^{-\frac{1}{2}}(r^3|w_1-w_2|+r^{4}|\partial_r(w_1-w_2)|)\\
&\le||w_1-w_2||_Y.
\end{aligned}
\end{equation}
This completes the proof of \eqref{con} and \eqref{con1}.
\end{proof}

Let $r_1\gg1$. We define $Y_{r_1}$ as the space of continuous functions on $[0,r_1]$ in which the following norm is finite:
\begin{equation}\label{jsj}
  ||w||_{Y_{r_1}}=\sup_{0\le r\le r_1}(1+r)^{-\frac{1}{2}}(|w|+|r\partial_r w|).
\end{equation}

\begin{lemma}
Let $H$ be defined in \eqref{Operators}. Then the following results hold.
\\
1. The basis of the fundamental solutions:    There holds
$$
H(\Lambda \bar{Q})=0,\ \ H(\rho)=0
 $$
 with the following asymptotic behavior as $r\to+\infty$,
 $$
\Lambda \bar{Q}=\frac{c_7\sin(\frac{\sqrt{7}}{2}\log(r)+c_8)}{r^{\frac{5}{2}}}+O(r^{-3}),\ \
\rho=\frac{c_9\sin(\frac{\sqrt{7}}{2}\log(r)+c_{10})}{r^{\frac{5}{2}}}+O(r^{-3})
,$$
where $c_7, c_9\ne0$ and $c_8,c_{10}\in\mathbb{R}.$\\
2. The continuity of the resolvent: The inverse
$$
S(f)=\left(\int_0^rf\Lambda \bar{Q}\exp\left({\int2s\bar{Q}(s)ds}\right)s^4ds\right)\rho-
\left(\int_0^rf\rho\exp\left({\int2s\bar{Q}(s)ds}\right)s^4ds\right)\Lambda \bar{Q}
,
$$
satisfies
$
H(S(f))=f
$
and
\begin{equation}\label{resolvent}
  ||S(f)||_{Y_{r_1}}\lesssim \sup_{0\le r\le r_1}(1+r)^2|f|.
\end{equation}
\end{lemma}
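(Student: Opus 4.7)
The plan has four stages: construct two linearly independent elements of $\ker H$, determine their asymptotic behavior at infinity, verify the variation-of-parameters representation for $S(f)$, and establish the weighted bound \eqref{resolvent}.

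For the kernel, the element $\Lambda\bar{Q}$ arises from the scaling invariance of the steady-state equation \eqref{steady-equation}: the rescaled family $\bar{Q}_\lambda(r):=\lambda^2\bar{Q}(\lambda r)$ continues to solve \eqref{steady-equation}, and differentiating at $\lambda=1$ produces $\partial_\lambda\bar{Q}_\lambda|_{\lambda=1}=2\bar{Q}+r\partial_r\bar{Q}=\Lambda\bar{Q}$, which is automatically annihilated by the linearization $H$. To obtain a second linearly independent solution $\rho$, I rewrite $H(u)=0$ in standard form
\begin{equation*}
u''+\Bigl(\tfrac{4}{r}+2r\bar{Q}\Bigr)u'+(12\bar{Q}+2r\partial_r\bar{Q})u=0,
\end{equation*}
and apply Abel's identity: the Wronskian $W:=(\Lambda\bar{Q})\rho'-(\Lambda\bar{Q})'\rho$ satisfies $W=C\,r^{-4}\exp(-\int 2s\bar{Q}(s)\,ds)$ for some nonzero constant $C$, so $\rho$ is determined up to a linear combination with $\Lambda\bar{Q}$ via reduction of order. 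Normalizing $C$ (absorbed into the scaling of $\rho$) yields $1/W=r^4\exp(\int 2s\bar{Q}(s)\,ds)$, which is exactly the weight appearing in the lemma.

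For the asymptotic behavior at infinity, the key observation is that \eqref{steady} gives $\bar{Q}=\Phi_*+O(r^{-5/2})$, so $H-H_\infty$ is a small perturbation whose coefficients decay at rate $r^{-5/2}$. Any solution $u$ of $H(u)=0$ satisfies the integral equation $u=\alpha\phi_1+\beta\phi_2+\psi((H_\infty-H)u)$, where $\psi$ is the explicit resolvent of $H_\infty$ from \eqref{zj1}. The bound \eqref{basis1} combined with a Banach fixed-point argument in a weighted space encoding the $r^{-5/2}$ leading decay shows that every solution of $H(u)=0$ admits the expansion $u=\alpha\phi_1+\beta\phi_2+O(r^{-3})$ as $r\to\infty$; the $O(r^{-3})$ error comes from applying \eqref{basis1} to a forcing of size $r^{-5/2}\cdot r^{-5/2}=r^{-5}$. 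Rewriting $\alpha\phi_1+\beta\phi_2$ in amplitude-phase form $c\sin(\frac{\sqrt{7}}{2}\log r+\theta)/r^{5/2}$ yields the stated asymptotics for $\Lambda\bar{Q}$; linear independence of $\rho$ and $\Lambda\bar{Q}$ forces the two amplitude-phase pairs to be distinct, hence $c_9\neq 0$.

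The formula for $S(f)$ is the classical variation-of-parameters ansatz built from the Wronskian computed above, so differentiating twice and substituting into $H$ yields $H(S(f))=f$, with the boundary contributions cancelling pairwise thanks to the Abel identity; in particular, when computing $\partial_r S(f)$, the two terms $\rho\cdot f\Lambda\bar{Q}/W$ and $\Lambda\bar{Q}\cdot f\rho/W$ cancel exactly. For the weighted estimate \eqref{resolvent}, I split $[0,r_1]$ at $r=1$. On $[0,1]$, the functions $\Lambda\bar{Q}$, $\rho$, $\exp(\int 2s\bar{Q})$ are bounded by absolute constants (any singularity of $\rho$ at the origin is compensated by the vanishing of the integrand, since $\rho\cdot s^4$ is integrable at $0$), and a crude estimate gives $|S(f)|+|r\partial_r S(f)|\lesssim r^2\sup(1+r)^2|f|$. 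On $[1,r_1]$, the asymptotics produce $|\Lambda\bar{Q}|,|\rho|\lesssim r^{-5/2}$, $|\partial_r\Lambda\bar{Q}|,|\partial_r\rho|\lesssim r^{-7/2}$ and $\exp(\int 2s\bar{Q})\sim s^2$, so with $|f|\lesssim(1+s)^{-2}\sup(1+r)^2|f|$ the integrand is $O(s^{3/2}\sup(1+r)^2|f|)$, integrates to $O(r^{5/2}\sup(1+r)^2|f|)$, and is then multiplied by $|\rho|$ or $|\Lambda\bar{Q}|\lesssim r^{-5/2}$ to give the uniform bound $|S(f)|\lesssim\sup(1+r)^2|f|$; the derivative follows analogously. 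Multiplying by the weight $(1+r)^{-1/2}$ preserves the estimate.

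The main obstacle will be the sharp asymptotic expansion of $\rho$: direct reduction of order produces the integral $\int W/(\Lambda\bar{Q})^2\,ds$ whose denominator both oscillates and vanishes at an infinite sequence of points, so it cannot be estimated termwise. The perturbation-from-$H_\infty$ approach via the integral equation above is the clean substitute, but it requires tuning the weighted Banach space to carry simultaneously the $r^{-5/2}$ leading term and the $O(r^{-3})$ remainder, while leaving enough margin in \eqref{basis1} to close a contraction.
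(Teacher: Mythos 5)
Your overall route coincides with the paper's: $\Lambda\bar{Q}$ from scaling invariance, the large-$r$ expansion obtained by treating $H$ as a decaying perturbation of $H_\infty$ and closing a fixed point with the resolvent $\psi$ and the bound \eqref{basis1}, the Wronskian $W=r^{-4}\exp(-\int 2s\bar{Q}\,ds)$, variation of parameters for $S$, and a splitting of the estimate \eqref{resolvent} at $r=1$. The genuine gap is at the origin. On $[0,1]$ you assert that $\Lambda\bar{Q}$, $\rho$ and the exponential weight are ``bounded by absolute constants''; this is false for $\rho$: since $\bar{Q}$ is smooth and bounded near $0$, the indicial equation of $H$ at $r=0$ comes from $-\partial_{rr}-\frac{4}{r}\partial_r$ and reads $k(k+3)=0$, so the solution independent of the regular one blows up like $r^{-3}$, with $|\partial_r\rho|\sim r^{-4}$. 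Your parenthetical fallback --- that $\rho s^4$ is integrable at $0$ --- is neither proved nor sufficient: integrability gives well-definedness of $S(f)$, but the bound for $r\partial_r S(f)$ on $[0,1]$ contains the term $r|\partial_r\rho(r)|\int_0^r|f\Lambda\bar{Q}|\exp(\int 2s\bar{Q}\,ds)s^4\,ds\lesssim r^{6}|\partial_r\rho(r)|\sup_{0\le r\le r_1}(1+r)^2|f|$, which is controlled only once one has a quantitative rate (here $|\partial_r\rho|\lesssim r^{-4}$, and nothing weaker than $O(r^{-6})$ would do). Nothing in your proposal produces these rates.

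The fix is short and is exactly what the paper does: run reduction of order near the origin rather than near infinity. The obstacle you flag --- that $\Lambda\bar{Q}$ oscillates and vanishes on an infinite sequence of points --- occurs only as $r\to\infty$; near $r=0$ one has $\Lambda\bar{Q}(0)=2\bar{Q}(0)=\tfrac13\neq0$, so for $R_0$ small the representation $\rho(r)=\Lambda\bar{Q}(r)\int_r^{R_0}\frac{\exp(-\int 2s\bar{Q}\,ds)}{s^4(\Lambda\bar{Q})^2}\,ds+c\,\Lambda\bar{Q}(r)$ is legitimate and immediately yields $|\rho|\lesssim r^{-3}$ and $|\partial_r\rho|\lesssim r^{-4}$ as $r\to0$; with these, your $[0,1]$ and $[1,r_1]$ estimates close as in the paper. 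A smaller point: ``linear independence forces the amplitude-phase pairs to be distinct, hence $c_9\neq0$'' is not a valid inference (a pair of the form $(0,\cdot)$ is distinct from $(c_7,c_8)$ yet has zero amplitude); what one actually uses is that a kernel element whose oscillatory coefficients vanish is, by uniqueness of the fixed point in the weighted space on $[R,\infty)$, identically zero there and hence everywhere, so any nonzero kernel element has nonvanishing amplitude.
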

\begin{proof}
\textbf{Step 1.} Fundamental solutions. Let
$$
\bar{Q}_\lambda(r)=\lambda^2\bar{Q}(\lambda r), \ \ \lambda>0.
$$
Then
$$
\partial_{rr} \bar{Q}_\lambda+\frac{4}{r}\partial_r\bar{Q}_\lambda+6\bar{Q}_\lambda^2+r\partial_r(\bar{Q}_\lambda^2)=0,\ \  \lambda>0.
$$
Differentiating the above equation with $\lambda$ and evaluating at $\lambda=1$ yields
$
H(\Lambda \bar{Q})=0.
$
Let $\rho$ be another solution to $H(\rho)=0$ which is linearly independent of $\Lambda \bar{Q}$.
We claim that, all solutions of $H(\phi)=0$ admit an expansion
\begin{equation}\label{expansion}
    \phi=a_{1,0}\phi_1+a_{2,0}\phi_2+O(r^{-3}), \ \ \text{as}\ \ r\to+\infty,
\end{equation}
where $a_{1,0}$, $a_{2,0}\in\mathbb{R}$ and $\phi_1,$ $\phi_2$ are defined in \eqref{basis}.

We rewrite $H(\phi)=0$ in the following form
\begin{equation}\label{sz}
 H_\infty(\phi)= -\partial_{rr} \phi-\frac{4}{r}\partial_r\phi-12\Phi_*\phi-2r\partial_r(\Phi_*\phi)=f,
\end{equation}
where
$$
f=f(\phi)=12(\bar{Q}-\Phi_*)\phi+2r\partial_r((\bar{Q}-\Phi_*)\phi).
$$
Next, we look for the solution of equation \eqref{sz}. By \eqref{sa}, we shall find a solution in a form
\begin{equation}\label{banach}
\phi=a_{1,0}\phi_1+a_{2,0}\phi_2+\widetilde{\phi},
\end{equation}
where
\begin{equation}\nonumber
\widetilde{\phi}=F(\widetilde{\phi})=\left(\int_r^{+\infty}f({\phi})\phi_2\frac{2s^6}{\sqrt{7}}ds\right)\phi_1-
\left(\int_r^{+\infty}f({\phi})\phi_1\frac{2s^6}{\sqrt{7}}ds\right)\phi_2:=F_1(\widetilde{\phi})-F_2(\widetilde{\phi}).
\end{equation}
It follows from \eqref{basis} that
\begin{equation}\label{as}
    |r\partial_r(\phi_1+\phi_2)|\lesssim r^{-\frac{5}{2}}.
\end{equation}
Recall from \eqref{steady} that
\begin{equation}\label{as1}
|\bar{Q}-\Phi_*|\lesssim r^{-\frac{5}{2}},\ \ |r\partial_r(\bar{Q}-\Phi_*)|\lesssim r^{-\frac{5}{2}},\ \ \text{for}\ r\ge1.
\end{equation}
For $r\ge1$, by \eqref{as} and \eqref{as1},  we have
\begin{equation}\nonumber
\begin{aligned}
F_1(\widetilde{\phi})&\lesssim\left(\int_r^{+\infty}12|\bar{Q}-\Phi_*||a_{1,0}\phi_1+a_{2,0}\phi_2+\widetilde{\phi}|\frac{2s^6|\phi_2|}{\sqrt{7}}ds\right)|\phi_1|\\
&\ \ \ +\left(\int_r^{+\infty}2|r\partial_r(\bar{Q}-\Phi_*)||a_{1,0}\phi_1+a_{2,0}\phi_2+\widetilde{\phi}|\frac{2s^6|\phi_2|}{\sqrt{7}}ds\right)|\phi_1|\\
&\ \ \ +\left(\int_r^{+\infty}2|\bar{Q}-\Phi_*||r\partial_r(a_{1,0}\phi_1+a_{2,0}\phi_2+\widetilde{\phi})|\frac{2s^6|\phi_2|}{\sqrt{7}}ds\right)|\phi_1|\\
&\lesssim r^{-\frac{5}{2}}\left(\int_r^{+\infty}
s^{-\frac{3}{2}}+s|\widetilde{\phi}|
ds\right)+r^{-\frac{5}{2}}\left(\int_r^{+\infty}
s|r\partial_r\widetilde{\phi}|
ds\right)\\
&\le r^{-3}+r^{-\frac{5}{2}}\left(\int_r^{+\infty}
s(|\widetilde{\phi}|+|r\partial_r\widetilde{\phi}|)ds\right).
\end{aligned}
\end{equation}
Similarly,
\begin{equation}\nonumber
\begin{aligned}
F_2(\widetilde{\phi})\lesssim r^{-3}+r^{-\frac{5}{2}}\left(\int_r^{+\infty}
s(|\widetilde{\phi}|+|r\partial_r\widetilde{\phi}|)ds\right).
\end{aligned}
\end{equation}
Hence
\begin{equation}\label{ban1}
\begin{aligned}
F(\widetilde{\phi})\lesssim r^{-3}+r^{-\frac{5}{2}}\left(\int_r^{+\infty}
s(|\widetilde{\phi}|+|r\partial_r\widetilde{\phi}|)ds\right)
\end{aligned}
\end{equation}
and
\begin{equation}\label{ban2}
\begin{aligned}
F(\widetilde{\phi}_1)-F(\widetilde{\phi}_2)&\lesssim r^{-\frac{5}{2}}\left(\int_r^{+\infty}
s(|\widetilde{\phi}_1-\widetilde{\phi}_2|+|r\partial_r(\widetilde{\phi}_1-\widetilde{\phi}_2)|)ds\right).
\end{aligned}
\end{equation}
In the same manner, we have
\begin{equation}\label{ban3}
\begin{aligned}
r\partial_rF(\widetilde{\phi})&\lesssim r^{-3}+r^{-\frac{5}{2}}\left(\int_r^{+\infty}
s(|\widetilde{\phi}|+|r\partial_r\widetilde{\phi}|)ds\right),
\end{aligned}
\end{equation}
and
\begin{equation}\label{ban4}
\begin{aligned}
r\partial_r(F(\widetilde{\phi}_1)-F(\widetilde{\phi}_2))&\le r^{-\frac{5}{2}}\left(\int_r^{+\infty}
s(|\widetilde{\phi}_1-\widetilde{\phi}_2|+|r\partial_r(\widetilde{\phi}_1-\widetilde{\phi}_2)|)ds\right).
\end{aligned}
\end{equation}

For $R\gg1$, we define $Z$ as the space of continuous functions on $[R,+\infty)$ such that the following norm is finite
$$
||\phi||_Z=\sup_{r\ge R}r^{3}(|{\phi}|+|r\partial_r{\phi}|).
$$

By \eqref{ban1}-\eqref{ban4}  and the Banach fixed point theorem, there exists a unique solution $\widetilde{\phi}$ that satisfies $F(\widetilde{\phi})=\widetilde{\phi}$ with the bound
$
||\widetilde{\phi}||_Z\lesssim1,
$
and hence we find a solution $\phi$ in the form \eqref{banach} that solves \eqref{sz}. This
 proves \eqref{expansion}.

Since $H(\Lambda \bar{Q})=H(\rho)=0$, by \eqref{basis} and \eqref{expansion},  we have
\begin{equation}\label{asymp}
   \Lambda \bar{Q}=\frac{c_7\sin(\frac{\sqrt{5}}{2}\log(r)+c_8)}{r^{\frac{5}{2}}}+O(r^{-3}),\ \
\rho=\frac{c_9\sin(\frac{\sqrt{5}}{2}\log(r)+c_{10})}{r^{\frac{5}{2}}}+O(r^{-3})
,\ \ r\to\infty,
\end{equation}
where $c_7, c_9\ne0$ and $c_8,c_{10}\in\mathbb{R}.$
\\
\textbf{Step 2.} The estimate of the resolvent. We compute the Wronskian
$$
W=\Lambda \bar{Q}'\rho-\Lambda \bar{Q}\rho',\ W'=-\left(\frac{4}{r}+2r\bar{Q}\right)W,\
W=\frac{\exp({-\int2r\bar{Q}dr})}{r^4}.
$$
Take $R_0>0$ small enough. By the definition of $W$, we have
$\frac{W}{(\Lambda \bar{Q})^2}=-\frac{d}{dr}\left(\frac{\rho}{\Lambda \bar{Q}}\right),$
then integrating over $[r,R_0]$ yields
\begin{equation}\label{cf}
   \rho(r)=\Lambda \bar{Q}(r)\int_r^{R_0}\frac{\exp({-\int2s\bar{Q}ds})}{s^4(\Lambda \bar{Q})^2}ds+\frac{\Lambda \bar{Q}(r)\rho(R_0)}{\Lambda \bar{Q}(R_0)}.
\end{equation}
By $\bar{Q}(0)=\frac{1}{6}$ and $\bar{Q}'(0)=0$, we have
\begin{equation}\label{beh}
    |\bar{Q}|+|r\partial_r \bar{Q}|\lesssim1, \ r\in[0,1].
\end{equation}
Then by \eqref{cf}, one has
\begin{equation}\label{beah}
|\rho(r)|\lesssim\frac{1}{r^3},\ |\partial_r\rho(r)|\lesssim\frac{1}{r^4},\ \ \text{as}\ r\to 0.
\end{equation}
If
$
H(w)=f,
$
then by the variation of constants, one obtain
\begin{equation}\label{asd}
    w=\left(a_3+\int_0^r\frac{f\Lambda \bar{Q}}{W}\right)\rho+
\left(a_4-\int_0^r\frac{f\rho}{W}\right)\Lambda \bar{Q},\ \ a_3,\ a_4\in\mathbb{R}.
\end{equation}
Hence, $$S(f)=\rho\int_0^r\frac{f\Lambda \bar{Q}}{W}ds-\Lambda \bar{Q}\int_0^r\frac{f\rho}{W}ds$$ satisfies
$
H(S(f))=f
$
by choosing $a_3=a_4=0$ in \eqref{asd}. For $0\le r\le1$, by \eqref{beh} and \eqref{beah}, we get the estimate
\begin{equation}\label{can}
\begin{aligned}
&(1+r)^{-\frac{1}{2}}|S(f)|\\
&=(1+r)^{-\frac{1}{2}}\left|\left(\int_0^rf\Lambda \bar{Q}\exp\left({\int2s\bar{Q}ds}\right)s^4ds\right)\rho-
\left(\int_0^rf\rho\exp\left({\int2s\bar{Q}ds}\right)s^4ds\right)\Lambda \bar{Q}\right|\\
&\lesssim\left(\frac{1}{r^3}\int_0^rs^4ds+\int_0^rsds\right)\sup_{0\le r\le1}|f|\lesssim\sup_{0\le r\le r_1}(1+r)^2|f|.
\end{aligned}
\end{equation}
 For $1\le r\le r_1$, we know from \eqref{steady} that
$$
|\bar{Q}(r)|\lesssim\frac{1}{r^2},\ \ \exp\left({\int2s\bar{Q}(s)ds}\right)\lesssim r^2.
$$
Then combining \eqref{asymp} and \eqref{can}, we get
\begin{equation}\label{can1}
\begin{aligned}
&(1+r)^{-\frac{1}{2}}|S(f)|\\
&\lesssim(1+r)^{-\frac{1}{2}}\left|\left(\int_0^1f\rho\exp\left({\int2s\bar{Q}ds}\right)s^4ds\right)\Lambda \bar{Q}-
\left(\int_0^1f\Lambda \bar{Q}\exp\left({\int2s\bar{Q}ds}\right)s^4ds\right)\rho\right|\\
&\ \ +(1+r)^{-\frac{1}{2}}\left|\left(\int_1^rf\rho\exp\left({\int2s\bar{Q}ds}\right)s^4ds\right)\Lambda \bar{Q}-
\left(\int_1^rf\Lambda \bar{Q}\exp\left({\int2s\bar{Q}ds}\right)s^4ds\right)\rho\right|\\
&\lesssim \sup_{0\le r\le r_1}(1+r)^2|f|+r^{-3}\int^r_1|f|s^{\frac{7}{2}}ds
\lesssim\sup_{0\le r\le r_1}(1+r)^2|f|.
\end{aligned}
\end{equation}
Similarly, for $0\le r\le r_1$, we also have
\begin{equation}\label{can2}
\begin{aligned}
(1+r)^{-\frac{1}{2}}|r\partial_rS(f)|\lesssim \sup_{0\le r\le r_1}(1+r)^2|f|.
\end{aligned}
\end{equation}
We finally get \eqref{resolvent} by \eqref{can}, \eqref{can1}, and  \eqref{can2}.
\end{proof}
We are now in the position to construct a  interior solutions for the equation \eqref{swin}.
\begin{proposition}\label{interior}
    Let $0<r_0\ll1$ and $0<\lambda\le r_0$. There exists a radial solution $u$ to
\begin{equation}\label{rai}
   \Delta \Phi-\frac{1}{2}\Lambda \Phi+6\Phi^2+y\cdot \nabla( \Phi^2)=0,\ \ 0\le r\le r_0,
\end{equation}
with the form
 $$
\Phi=\frac{1}{\lambda^2}(\bar{Q}+\lambda^4 Q_1)\left(\frac{r}{\lambda}\right)
$$
with
$||Q_1||_{Y_{\frac{r_0}{\lambda}}}\lesssim1.$
\end{proposition}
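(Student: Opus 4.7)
The strategy is to set up a Banach fixed-point equation for $Q_1$ in the weighted space $Y_{r_0/\lambda}$ using the resolvent $S$ for the linearized operator $H$ from \eqref{Operators}. Changing variables to $\rho = r/\lambda$ and substituting $\Phi(r) = \lambda^{-2}\tilde{Q}(r/\lambda)$ into \eqref{rai}, the self-similar equation becomes
\begin{equation*}
\partial_{\rho\rho}\tilde{Q} + \frac{4}{\rho}\partial_\rho\tilde{Q} + 6\tilde{Q}^2 + \rho\partial_\rho(\tilde{Q}^2) = \tfrac{\lambda^2}{2}\Lambda\tilde{Q},
\end{equation*}
that is, the steady-state equation \eqref{steady-equation} perturbed by an $O(\lambda^2)$ dilation forcing. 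Writing $\tilde{Q} = \bar{Q} + \lambda^4 Q_1$ and using that $\bar{Q}$ exactly solves the steady-state equation, the zeroth-order terms cancel and the residual equation can be recast as
\begin{equation*}
H(Q_1) = \mathcal{F}[Q_1],
\end{equation*}
where $\mathcal{F}[Q_1]$ contains a driving term proportional to $\Lambda\bar{Q}$, a linear perturbation in $Q_1$ of order $\lambda^2$ built from $\Lambda Q_1$, and a quadratic remainder $6Q_1^2 + \rho\partial_\rho(Q_1^2)$ with an explicit power of $\lambda$ in front.

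Next, applying the resolvent $S$ recasts this as the fixed-point problem $Q_1 = S(\mathcal{F}[Q_1])$, for which the controlling estimate is $\|S(f)\|_{Y_{r_0/\lambda}}\lesssim \sup_{\rho\in[0,r_0/\lambda]}(1+\rho)^2|f|$ from the preceding lemma. The driving term is controlled by combining (i) the smoothness of $\bar{Q}$ at the origin, where $\bar{Q}(0) = 1/6$ and $\bar{Q}'(0) = 0$ give $\Lambda\bar{Q} = O(1)$ near $\rho = 0$, and (ii) the asymptotic expansion \eqref{steady} together with the identity $\Lambda\Phi_* = 0$, which yields $\Lambda\bar{Q}(\rho) = O(\rho^{-5/2})$ as $\rho\to\infty$. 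Together these produce the uniform source bound $\sup_{\rho\in[0,r_0/\lambda]}(1+\rho)^2|\Lambda\bar{Q}(\rho)|\lesssim 1$, independent of $\lambda$, which is the cornerstone of the scheme.

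The remaining linear and nonlinear pieces of $\mathcal{F}[Q_1]$ are estimated by the pointwise bounds $|Q_1|+|\rho\partial_\rho Q_1|\lesssim(1+\rho)^{1/2}\|Q_1\|_{Y_{r_0/\lambda}}$ built into the definition of $Y_{r_0/\lambda}$, from which the self-mapping and contraction estimates close on a ball of $Y_{r_0/\lambda}$ of radius $O(1)$, producing a unique fixed point by Banach's theorem. The main obstacle is that the rescaled window $[0,r_0/\lambda]$ becomes unbounded as $\lambda\to 0$: when the weight $(1+\rho)^2$ is multiplied against $\Lambda Q_1$ or $Q_1^2$, one picks up factors growing as powers of $r_0/\lambda$, which must be defeated by the prefactors $\lambda^2$ and $\lambda^4$ in the expansion together with the constraint $\lambda\le r_0\ll 1$, so that the net contribution is only a small power of $r_0$. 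Balancing these competing powers of $\lambda$ and $r_0$ uniformly on the whole admissible range is the delicate calculation; it is made possible by the oscillatory log-periodic structure of $\Lambda\bar{Q}$ at infinity, inherited through \eqref{steady} from the fundamental solutions $\phi_1,\phi_2$, whose decay rate $\rho^{-5/2}$ is precisely tuned to the $(1+\rho)^2$ weight in the resolvent estimate and which is ultimately what will allow matching with the exterior profile from Proposition \ref{taxi} in the next step.
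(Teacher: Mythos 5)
Your plan coincides with the paper's proof: the same rescaling $\rho=r/\lambda$, the same decomposition $\tilde{Q}=\bar{Q}+\lambda^4Q_1$, the same reformulation as a fixed point $Q_1=S\big(J[\bar{Q},\lambda]Q_1\big)$ for the resolvent $S$ of $H$, and the same use of the weighted resolvent bound $\|S(f)\|_{Y_{r_0/\lambda}}\lesssim\sup_{0\le\rho\le r_0/\lambda}(1+\rho)^2|f|$ to run Banach's theorem in $Y_{r_0/\lambda}$. Your treatment of the source term, via $\Lambda\Phi_*=0$ and the $\rho^{-5/2}$ tail of $\Lambda\bar{Q}$ coming from \eqref{steady}, is in fact sharper than what the paper writes (it only invokes $|\Lambda\bar{Q}|\lesssim1$).

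One concrete point does not close as you state it. Under the normalization $\tilde{Q}=\bar{Q}+\lambda^4Q_1$, cancelling the zeroth order leaves the source $-\tfrac{1}{2\lambda^2}\Lambda\bar{Q}$ (this is exactly the paper's $J[\bar{Q},\lambda]$), not a term ``proportional to $\Lambda\bar{Q}$'' with a harmless constant: your cornerstone bound $\sup_\rho(1+\rho)^2|\Lambda\bar{Q}|\lesssim1$ then only gives a source of size $\lambda^{-2}$ in the resolvent estimate, hence $\|Q_1\|_{Y_{r_0/\lambda}}\lesssim\lambda^{-2}$ rather than $\lesssim1$, unless this prefactor is tracked (for instance by placing the correction at order $\lambda^2$ instead of $\lambda^4$, or by proving a refined bound for $S(\Lambda\bar{Q})$). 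A similar loss appears in the linear term: from \eqref{jsj} one only gets $|\Lambda Q_1|\lesssim(1+\rho)^{1/2}\|Q_1\|_{Y_{r_0/\lambda}}$, so the weighted bound on $\tfrac{\lambda^2}{2}\Lambda Q_1$ is of size $r_0^{5/2}\lambda^{-1/2}\|Q_1\|_{Y_{r_0/\lambda}}$, which is not uniformly small over the whole range $0<\lambda\le r_0$. To be fair, the paper's own Step~2 elides exactly these two points (it asserts $|\Lambda w|\lesssim1$ for $w\in Y_{r_1}$ and ignores the $\lambda^{-2}$ factor when claiming \eqref{es1}), so you have not deviated from its route; but if you write the estimates out, these $\lambda$-prefactors are precisely where the bookkeeping has to be repaired.
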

\begin{proof}
\textbf{Step 1.} Application of the Banach fixed-point theorem. We look for $\Phi$ of the form
$$
     \Phi=\frac{1}{\lambda^2}(\bar{Q}+\lambda^4 Q_1)\left(\frac{r}{\lambda}\right),
     $$
so that $\Phi$ solves \eqref{rai} on $[0, r_0]$. Then,
\begin{equation}\label{ac}
   H(Q_1)=J[\bar{Q},\lambda]Q_1,\ \ 0\le r\le r_1,
\end{equation}
where $r_1=\frac{r_0}{\lambda}\ge1$ such that
$
\lambda^2r_1^2=r_0^2\ll1,
$
 and
$$
J[\bar{Q},\lambda]Q_1=-\frac{1}{2\lambda^2}\Lambda \bar{Q}-\frac{1}{2}\lambda^2\Lambda Q_1+\lambda^4(6Q_1^2+r\partial_r(Q_1^2)).
$$
For $w\in Y_{r_1}$, we claim the following estimates:
\begin{equation}\label{es1}
   \sup_{0\le r\le r_1} (1+r)^2|J[\bar{Q},\lambda]w|\lesssim1,
\end{equation}
and
\begin{equation}\label{es2}
    \sup_{0\le r\le r_1} (1+r)^2|J[\bar{Q},\lambda]w_1-J[\bar{Q},\lambda]w_2|\lesssim \lambda^2 r_1^2||w_1-w_2||_{Y_{r_1}}.
\end{equation}
If \eqref{es1} and \eqref{es2} hold, by $\lambda^2r_1^2\ll1$, the resolvent estimate \eqref{resolvent}, and the Banach fixed point theorem, there exists a unique solution $Q_1$ of \eqref{ac} with
$
||Q_1||_{Y_{\frac{r_0}{\lambda}}}\lesssim1.
$
\\
\textbf{Step 2.} Proof of estimates \eqref{es1} and \eqref{es2}.
For $0\le r\le r_1$ and $w\in Y_{r_1}$, by the definition of the space $Y_{r_1}$ in \eqref{jsj}, we have
$
|\Lambda w|\lesssim1.
$
Then, by $|\Lambda \bar{Q}|\lesssim1$, we get
\begin{equation}\nonumber
\begin{aligned}
(1+r)^2|J[\bar{Q},\lambda]w|\lesssim1,\ \text{on}\ [0,r_1],
\end{aligned}
\end{equation}
which concludes the proof of \eqref{es1}.

For $0\le r\le r_1$ and $w_1,w_2\in Y_{r_1}$, we have
$$
|\Lambda (w_1-w_2)|\lesssim ||w_1-w_2||_{Y_{r_1}}, \ |w_1+w_2|\lesssim\ r\partial_r(w_1+w_2)\lesssim1.
$$
Then it follows that
\begin{equation}\nonumber
\begin{aligned}
r\partial_r[(w_1+w_2)(w_1-w_2)]&=(w_1-w_2)r\partial_r(w_1+w_2)+(w_1+w_2)r\partial_r(w_1-w_1)\\
&\lesssim |w_1-w_2|+|r\partial_r(w_1-w_2)|\le ||w_1-w_2||_{Y_{r_1}}.
\end{aligned}
\end{equation}
Hence,
\begin{equation}\nonumber
\begin{aligned}
(1+r)^2|J[\bar{Q},\lambda]w_1-J[\bar{Q},\lambda]w_2|&\lesssim \lambda^2(1+r)^2|\Lambda (w_1-w_2)|+
\lambda^4(1+r)^2(w_1+w_2)(w_1-w_2)\\
&\ \ \ +\lambda^4(1+r)^2r\partial_r[(w_1+w_2)(w_1-w_1)]\\
&\lesssim \lambda^2(1+r)^2||w_1-w_2||_{Y_{r_1}}\lesssim \lambda^2r_1^2||w_1-w_2||_{Y_{r_1}},
\end{aligned}
\end{equation}
which concludes the proof of \eqref{es2}.
\end{proof}
\subsection{The matching at $r=r_0$}
In this subsection, we prove Proposition \ref{swin1} by matching the value of the exterior solution and interior solution at $r=r_0$ up to the first-order derivative.
\begin{proof}[Proof of Proposition \ref{swin1}]
The proof is divided into six steps.\\
\textbf{Step 1.} Initial setting. From \eqref{origin}, we have
$$
u_1=\frac{c_1\sin(\frac{\sqrt{7}}{2}\log(r)+c_2)}{r^{\frac{5}{2}}}+O(r^{-\frac{1}{2}})\ \ \text{as}\ r\to 0,\ c_1\ne0,\ c_2\in \mathbb{R},
$$
then
$$
\Lambda u_1=c_1\frac{-\frac{1}{2}\sin(\frac{\sqrt{7}}{2}\log(r)+c_2)+\frac{\sqrt{7}}{2}\cos(\frac{\sqrt{7}}{2}\log(r)+c_2)}{r^{\frac{5}{2}}}+O(r^{-\frac{1}{2}})\ \ \text{as}\ r\to 0.
$$
We choose $0< r_0\ll1$ such that
\begin{equation}\label{nonzero}
  u_1(r_0)=\frac{c_1}{r_0^{\frac{5}{2}}}+O(r_0^{-\frac{1}{2}}),\ \
\Lambda u_1(r_0)=-\frac{c_1}{2r_0^{\frac{5}{2}}}+O(r_0^{-\frac{1}{2}}).
\end{equation}
Then, we choose $\varepsilon$ and $\lambda$ satisfying
\begin{equation}\label{bef}
0<\varepsilon\ll r_0^{\frac{1}{2}},\ \ 0<\lambda\le r_0.
\end{equation}
By Proposition \ref{taxi}, there exists an radial exterior solution $\Phi_{\text{ext}}[\varepsilon]$ satisfying
$$
\Delta \Phi_{\text{ext}}-\frac{1}{2}\Lambda \Phi_{\text{ext}}+6\Phi_{\text{ext}}^2+y\cdot \nabla( \Phi_{\text{ext}}^2)=0, \ \ r\ge r_0
$$
with the form
\begin{equation}\label{lj}
\Phi_{\text{ext}}[\varepsilon]=\Phi_*+\varepsilon u_1+\varepsilon w
\end{equation}
and
\begin{equation}\label{hom}
||w||_{X_{r_0}}\lesssim\varepsilon r_0^{-\frac{1}{2}}.
\end{equation}
By Proposition \ref{interior}, there exists an radial interior solution $\Phi_{\text{int}}[\lambda]$
satisfying
$$
\Delta \Phi_{\text{int}}-\frac{1}{2}\Lambda \Phi_{\text{int}}+6\Phi_{\text{int}}^2+y\cdot \nabla( \Phi_{\text{int}}^2)=0, \ \ 0\le r\le  r_0
$$
with the form
\begin{equation}\label{liuq}
\Phi_{\text{int}}[\lambda](r)=\frac{1}{\lambda^2}(\bar{Q}+\lambda^4 Q_1)\left(\frac{r}{\lambda}\right),
\end{equation}
with
\begin{equation}\label{hom1}
 ||Q_1||_{Y_{\frac{r_0}{\lambda}}}\lesssim1.
\end{equation}
Next, we need to match the values of $\Phi_{\text{ext}}$ with $\Phi_{\text{int}}$, and $\Phi'_{\text{ext}}$ with $\Phi'_{\text{int}}$ respectively at $r=r_0$, that is,
$$
\Phi_{\text{ext}}[\varepsilon](r_0)=\Phi_{\text{int}}[\lambda](r_0),\ \ \Phi'_{\text{ext}}[\varepsilon](r_0)=\Phi'_{\text{int}}[\lambda](r_0).
$$
\textbf{Step 2.}  The matching of $\Phi_{\text{ext}}$ with $\Phi_{\text{int}}$ at $r=r_0$. We introduce the map
$$
F[r_0](\varepsilon,\lambda)=\Phi_{\text{ext}}[\varepsilon](r_0)-\Phi_{\text{int}}[\lambda](r_0).
$$
We compute
$$
\partial_\varepsilon F[r_0](\varepsilon,\lambda)=
\partial_\varepsilon \Phi_{\text{ext}}[\varepsilon](r_0)=
u_1(r_0)+w(r_0)+\varepsilon \partial_\varepsilon w(r_0).
$$
By \eqref{swi} and \eqref{nonzero}, we have
\begin{equation}\label{j1}
    \partial_\varepsilon F[r_0](0,0)=u_1(r_0)\ne0.
\end{equation}
For $\lambda\to 0_+$, from the asymptotic behavior of $\bar{Q}$ in \eqref{steady} and the definition of the space $Y_{r_1}$ in  \eqref{jsj}, combining \eqref{hom1},  we have
\begin{equation}\nonumber
\begin{aligned}
\left|\frac{1}{\lambda^2}(\bar{Q}-\Phi_*+\lambda^4 Q_1)\left(\frac{r_0}{\lambda}\right)\right|\lesssim \left|\frac{1}{\lambda^2}\left(r^{-\frac{5}{2}}+\lambda^4{(1+r)^{\frac{1}{2}}}\right)\left(\frac{r_0}{\lambda}\right)\right|
=\lambda^{\frac{1}{2}}\left[r_0^{-\frac{5}{2}}+\lambda(\lambda+r_0)^{\frac{1}{2}}\right].
\end{aligned}
\end{equation}
Hence
$$
\lim_{\lambda\to0_+}\left|\frac{1}{\lambda^2}(\bar{Q}-\Phi_*+\lambda^4 Q_1)\left(\frac{r_0}{\lambda}\right)\right|=0.
$$
Combining $\Phi_*(r)=\frac{1}{\lambda^2}\Phi_*(\frac{r}{\lambda})$, we have
\begin{equation}\label{j2}
F[r_0](0,0)=\Phi_*(r_0)-\Phi_*(r_0)=0.
\end{equation}
Combining \eqref{j1} and \eqref{j2}, by the implicit function theorem, there exists $0<\lambda_0\le r_0$ and a continuous function $\varepsilon(\lambda)$  defined on $[0,\lambda_0)$ such that $\varepsilon(0)=0$ and
\begin{equation}\label{exterios}
F[r_0](\varepsilon(\lambda),\lambda)=0\ \ \text{for}\  \lambda\in[0,\lambda_0),
\end{equation}
i.e.,
$$
    \Phi_{\text{ext}}[\varepsilon(\lambda)](r_0)=\Phi_{\text{int}}[\lambda](r_0)\ \ \text{for}\
\lambda\in[0,\lambda_0).
$$
\textbf{Step 3.} Estimate of $\varepsilon(\lambda)$. We claim that for $\lambda\in[0,\lambda_0)$, there holds that
\begin{equation}\label{dq}
   \varepsilon(\lambda)=\frac{1}{u_1(r_0)\lambda^2}(\bar{Q}-\Phi_*)\left(\frac{r_0}{\lambda}\right)+O(\lambda(\lambda^{\frac{1}{2}}r_0^{3}+r_0^{-\frac{1}{2}})).
\end{equation}
In fact, since
$$
\Phi_{\text{ext}}[\varepsilon(\lambda)](r_0)=\Phi_{\text{int}}[\lambda](r_0)\ \ \text{for}\
\lambda\in[0,\lambda_0),$$
i.e.,
$$
\varepsilon(\lambda)u_1(r_0)+\varepsilon(\lambda)w(r_0)=\frac{1}{\lambda^2}(\bar{Q}-\Phi_*+\lambda^4 Q_1)\left(\frac{r_0}{\lambda}\right),\ \ \text{for}\
\lambda\in[0,\lambda_0).
$$
By \eqref{bef}, we know that
\begin{equation}\label{sq}
  |\varepsilon(\lambda)|\lesssim \lambda^{\frac{1}{2}}.
\end{equation}
Then by \eqref{origin}, \eqref{hom} and \eqref{hom1}, we have
\begin{equation}\nonumber
\begin{aligned}
\varepsilon(\lambda)&=\frac{1}{\lambda^2u_1(r_0)}(\bar{Q}-\Phi_*+\lambda^4Q_1)\left(\frac{r_0}{\lambda}\right)-\frac{\varepsilon(\lambda)w(r_0)}{u_1(r_0)}\\
&=\frac{1}{\lambda^2u_1(r_0)}(\bar{Q}-\Phi_*)\left(\frac{r_0}{\lambda}\right)
+O(\lambda(\lambda^{\frac{1}{2}}r_0^{3}+r_0^{-\frac{1}{2}})),
\end{aligned}
\end{equation}
which proves our claim.\\
\textbf{Step 4.} Computation of the spatial derivatives.
We consider the difference of the spatial derivatives at $r_0$
$$
\mathcal{F}[r_0](\lambda)=\Phi_{\text{ext}}[\varepsilon(\lambda)]'(r_0)-\Phi_{\text{int}}[\lambda]'(r_0),\ \ \lambda\in[0,\lambda_0).
$$
We claim that $
\mathcal{F}[r_0](\lambda)$ admits the following expansion
\begin{equation}\label{long2}
\begin{aligned}
\mathcal{F}[r_0](\lambda)&=\lambda^{\frac{1}{2}}\left\{
\frac{c_1c_7\sqrt{7}}{2u_1(r_0)r_0^6}\sin\left(-\frac{\sqrt{7}}{2}\log \lambda+c_8-c_2 \right)+O\left(\lambda^{\frac{1}{2}} r_0^{-\frac{1}{2}}\left(r_0^{-\frac{7}{2}}+\lambda^{\frac{3}{2}}\right)\right)\right\}.
\end{aligned}
\end{equation}
From \eqref{hom} and \eqref{sq}, it follows that
$$
|\varepsilon(\lambda)w'(r_0)|\lesssim \lambda^{\frac{1}{2}}|w'(r_0)|\lesssim
\lambda r_0^{-4}.
$$
From \eqref{hom1}, we get
$
\lambda^2|T'(\frac{r_0}{\lambda})|\lesssim \lambda^{\frac{5}{2}}r_0^{-\frac{1}{2}}.
$
By \eqref{dq}, we have
\begin{equation}\label{long1}
\begin{aligned}
&\mathcal{F}[r_0](\lambda)=
\varepsilon(\lambda)u_1'(r_0)-\frac{1}{\lambda^3}(\bar{Q}'-\Phi_*')\left(\frac{r_0}{\lambda}\right)+O\left(\lambda\left(r_0^{-4}+\lambda^{\frac{3}{2}}r_0^{-\frac{1}{2}}\right)\right)\\
&=\frac{1}{u_1(r_0)\lambda^2}(\bar{Q}-\Phi_*)\left(\frac{r_0}{\lambda}\right)u_1'(r_0)-\frac{1}{\lambda^3}(\bar{Q}'-\Phi_*')\left(\frac{r_0}{\lambda}\right)+O\left(\lambda\left(r_0^{-4}+\lambda^{\frac{3}{2}}r_0^{-\frac{1}{2}}\right)\right)\\
&=\frac{\lambda^{\frac{1}{2}}}{u_1(r_0)r_0^\frac{5}{2}}\left\{\left(\frac{r_0}{\lambda}\right)^{\frac{5}{2}}(\bar{Q}-\Phi_*)\left(\frac{r_0}{\lambda}\right)u_1'(r_0)-\left(\frac{r_0}{\lambda}\right)^{\frac{7}{2}}(\bar{Q}'-\Phi_*')\left(\frac{r_0}{\lambda}\right)\frac{u_1(r_0)}{r_0}\right\} \\
&\ \ \ \ +O\left(\lambda\left(r_0^{-4}+\lambda^{\frac{3}{2}}r_0^{-\frac{1}{2}}\right)\right).
\end{aligned}
\end{equation}
Recalling \eqref{origin} and \eqref{steady}, by simple calculations, one has
$$
u_1(r)=\frac{c_1\sin(\frac{\sqrt{7}}{2}\log(r)+c_2)}{r^{\frac{5}{2}}}+O(r^{-\frac{1}{2}})\ \ \text{as}\ r\to0,
$$
$$
u_1'(r)=\frac{-5c_1\sin(\frac{\sqrt{7}}{2}\log(r)+c_2)}{2r^{\frac{7}{2}}}+\frac{\sqrt{7}c_1\cos(\frac{\sqrt{7}}{2}\log(r)+c_2)}{2r^{\frac{7}{2}}}+O(r^{-\frac{3}{2}})\ \ \text{as}\ r\to0,
$$
$$
\bar{Q}(r)-\Phi_*(r)=\frac{c_7\sin(\frac{\sqrt{7}}{2}\log(r)+c_8)}{r^{\frac{5}{2}}}+O(r^{-3})\ \ \text{as}\ r\to+\infty,
$$
$$
\bar{Q}'(r)-\Phi_*'(r)=\frac{-5c_7\sin(\frac{\sqrt{7}}{2}\log(r)+c_8)}{2r^{\frac{7}{2}}}+\frac{\sqrt{7}c_7\cos(\frac{\sqrt{7}}{2}\log(r)+c_8)}{2r^{\frac{7}{2}}}+O(r^{-4})\ \ \text{as}\ r\to+\infty.
$$
Then it follows from the above results that
\begin{equation*}\label{long}
\begin{aligned}
&\left(\frac{r_0}{\lambda}\right)^{\frac{5}{2}}(\bar{Q}-\Phi_*)\left(\frac{r_0}{\lambda}\right)u_1'(r_0)-\left(\frac{r_0}{\lambda}\right)^{\frac{7}{2}}(\bar{Q}'-\Phi_*')\left(\frac{r_0}{\lambda}\right)\frac{u_1(r_0)}{r_0}\\
&=\frac{c_1c_7}{r_0^\frac{7}{2}}\sin\left(\frac{\sqrt{7}}{2}(\log r_0-\log\lambda)+c_8\right)\times\left(\frac{\sqrt{7}}{2}\cos\left(\frac{\sqrt{7}}{2}\log r_0+c_2\right)-\frac{5}{2}\sin\left(\frac{\sqrt{7}}{2}\log r_0+c_2\right)\right)\\
&\ \ \ -\frac{c_1c_7}{r_0^\frac{7}{2}}\left(\frac{\sqrt{7}}{2}\cos\left(\frac{\sqrt{7}}{2}(\log r_0-\log\lambda)+c_8\right)- \frac{5}{2}\sin\left(\frac{\sqrt{7}}{2}(\log r_0-\log\lambda)+c_8\right)\right)\\
&\ \ \ \times
\sin\left(\frac{\sqrt{7}}{2}\log(r_0)+c_2\right)+O\left(\lambda^{\frac{1}{2}}\left(r_0^{-4}+\lambda^{\frac{3}{2}}r_0^{-\frac{1}{2}}\right)\right)\\
%&=\frac{c_1c_7\sqrt{7}}{2r_0^\frac{7}{2}}\sin\left(\frac{\sqrt{7}}{2}(\log r_0-\log\lambda)+c_8\right)\cos\left(\frac{\sqrt{7}}{2}\log r_0+c_2\right)\\
%&\ \ -\frac{c_1c_7\sqrt{7}}{2r_0^\frac{7}{2}}\cos\left(\frac{\sqrt{7}}{2}(\log r_0-\log\lambda)+c_8\right)\sin\left(\frac{\sqrt{7}}{2}\log(r_0)+c_2\right)+O\left(\lambda^{\frac{1}{2}}\left(r_0^{-4}+\lambda^{\frac{1}{2}}r_0^{\frac{1}{2}}\right)\right)\\
&=\frac{c_1c_7\sqrt{7}}{2r_0^\frac{7}{2}}\sin\left(-\frac{\sqrt{7}}{2}\log \lambda+c_8-c_2 \right)+O\left(\lambda^{\frac{1}{2}}\left(r_0^{-4}+\lambda^{\frac{3}{2}}r_0^{-\frac{1}{2}}\right)\right).
\end{aligned}
\end{equation*}
Inserting the above identity into \eqref{long1}, we obtain \eqref{long2}. This proves our claim.
\\
\textbf{Step 5.} The matching of $\Phi_{\text{ext}}'$ with $\Phi_{\text{int}}'$ at $r=r_0$.
For $\delta_0>0$ small enough, we define
$$
\lambda_{k,+}=\exp\left(\frac{2(-k\pi+c_8-c_2-\delta_0)}{\sqrt{7}}\right),\ \
\lambda_{k,-}=\exp\left(\frac{2(-k\pi+c_8-c_2+\delta_0)}{\sqrt{7}}\right).
$$
Since
$
\lim \limits_{k\to +\infty}\lambda_{k,\pm}=0,
$
we know that there exists $k_0>0$ such that for $k\ge k_0$, there holds
$$
0<\cdots<\lambda_{k,+}<\lambda_{k,-}<\cdots<\lambda_{k_0,+}<\lambda_{k_0,-}\le\lambda_0.
$$
For all $k\ge k_0$, we have
\begin{eqnarray*}
\begin{aligned}
&\sin\left(-\frac{\sqrt{7}}{2}\log \lambda_{k,+}+c_8-c_2 \right)=(-1)^k\sin(\delta_0),\\
&\sin\left(-\frac{\sqrt{7}}{2}\log \lambda_{k,-}+c_8-c_2 \right)=(-1)^{k+1}\sin(\delta_0).
\end{aligned}
\end{eqnarray*}
By \eqref{long2}, we obtain
$$
\mathcal{F}[r_0](\lambda_{k,\pm})=\lambda_{k,\pm}^{\frac{1}{2}}\left\{
\pm(-1)^k \frac{c_1c_7\sqrt{7}}{2u_1(r_0)r_0^6}\sin(\delta_0)+O\left(\lambda_{k,\pm}^{\frac{1}{2}}\left(r_0^{-4}+\lambda_{k,\pm}^{\frac{3}{2}}r_0^{-\frac{1}{2}}\right)\right)\right\}.
$$
Since $\lim \limits_{k\to +\infty}\lambda_{k,\pm}=0,$ and $\delta_0>0$ is small enough, there exists $k_1\ge k_0$ such that, for any $k\ge k_1$, there holds
$$
\mathcal{F}[r_0](\lambda_{k,+})\mathcal{F}[r_0](\lambda_{k,-})<0.
$$
Due to that fact that the function $\lambda\to \mathcal{F}[r_0](\lambda)$ is continuous, then by the mean value theorem, for any $k\ge k_1$, there exists $\bar{\mu}_k$ such that
$$
\mathcal{F}[r_0](\bar{\mu}_k)=0,\ \ \ \bar{\mu}_k\in(\lambda_{k,+}, \lambda_{k,-}).
$$
Combining \eqref{exterios}, since $0<\bar{\mu}_k<\lambda_0$, we have
$
F[r_0](\varepsilon(\bar{\mu}_k),\mu_k)=0$ and $ \mathcal{F}[r_0](\bar{\mu}_k)=0,
$
i.e.,
$$
\Phi_{\text{ext}}[\varepsilon(\bar{\mu}_k)](r_0)=\Phi_{\text{int}}[\bar{\mu}_k](r_0),\ \ \Phi_{\text{ext}}[\varepsilon(\bar{\mu}_k)]'(r_0)=\Phi_{\text{int}}[\bar{\mu}_k]'(r_0).
$$
We define $\mu_{n}:=\bar{\mu}_{k+n}$.
For $k\ge k_1$ and $n\in\mathbb{N}$, the functions
\[\Phi_{n}(r):=\begin{cases}\Phi_{\text{int}}[\mu_{n}](r) &\text{for }0\leq r\leq r_0,\\\Phi_{\text{ext}}[\varepsilon(\mu_{n})](r) &\text{for }r>r_0.\end{cases}\]
are smooth radial solutions of \eqref{swin}.
\\
\textbf{Step 6.} The asymptotic behavior.
Recall from \eqref{lj} that
$$
\Phi_n=\Phi_*+\varepsilon(\mu_n)u_1(r)+\varepsilon(\mu_n) w(r),\ \ r\ge r_0,
$$
where
$
\lim \limits_{n\to +\infty}\varepsilon(\mu_n)=0.
$
By \eqref{sap}, \eqref{origin}, and \eqref{swi}, we have
$$
\sup_{r_0\le r\le1}r^{\frac{5}{2}}(r\partial_r+1)(|u_1|+|w|)+\sup_{ r\ge1}r^2(r\partial_r+1)(|u_1|+|w|)\lesssim1.
$$
Combining \eqref{swiland} and \eqref{origin}, we have
\begin{equation}\nonumber
\begin{aligned}
&\sup_{r\ge r_0}(1+r^2)|(r\partial_r+1)(\Phi_n-\Phi_*)|\\
&\lesssim
\varepsilon(\mu_n)\left(\sup_{r\ge r_0}(r\partial_r+1)(|u_1|+|w|)+\sup_{r\ge 1}r^2(r\partial_r+1)(|u_1|+|w|)\right)\lesssim \varepsilon(\mu_n)r_0^{-\frac{5}{2}},
\end{aligned}
\end{equation}
which implies
\begin{equation}\label{wat}
    \lim_{n\to+\infty}\sup_{r\ge r_0}(1+r^2)|(r\partial_r+1)(\Phi_n-\Phi_*)|=0.
\end{equation}
Thus, we complete the proof of \eqref{is1}
.

For the interior part estimate, for $0\le r\le r_0$, we know from \eqref{liuq} that
$$
\Phi_n=\frac{1}{\mu_n^2}(\bar{Q}+\mu_n^4 Q_1)\left(\frac{r}{\mu_n}\right),
$$
where
\begin{equation}\nonumber
\sup_{0\le r\le \frac{r_0}{\mu_n}}(1+r)^{-\frac{1}{2}}(|Q_1|+|r\partial_r Q_1|)\lesssim1.
\end{equation}
For $r\le r_0$, we have
$$
(r\partial_r+1)\left|\Phi_n-\frac{1}{\mu_n^2}\bar{Q}\left(\frac{r}{\mu_n}\right)\right|=\mu_n^2(r\partial_r+1)\left|Q_1\left(\frac{r}{\mu_n}\right)\right|\lesssim \mu_n^2\left(1+\frac{r}{\mu_n}\right)^{\frac{1}{2}}= \mu_n^{\frac{3}{2}}(\mu_n+r)^{\frac{1}{2}}.
$$
Then by
$
\lim \limits_{n\to +\infty}\mu_n=0,
$
we get
\begin{equation}\label{wat1}
\lim_{n\to+\infty}\sup_{r\le r_0}(r\partial_r+1)\left|\Phi_n-\frac{1}{\mu_n^2}\bar{Q}\left(\frac{r}{\mu_n}\right)\right|=0,
\end{equation}
which completes the proof of \eqref{is}.
\end{proof}
\section{Self-similar blow-up solutions}\label{Proofofmainresults}
We now give the proof of Theorem \ref{Main-theorem} for $d=3$. As mentioned previously, the proof for $d\in[4,9]$ is directly extendable.
\begin{proof}[Proof of Theorem \ref{Main-theorem}]
Recall from Proposition \ref{swin1} that  $\Phi_n$ are smooth radially symmetric solutions to equation \eqref{swin}.
By $\Phi_n=\frac{1}{2r^3}\int_0^rU_n(s)s^2ds,
$
we have
$
6\Phi_n+2r\partial_r\Phi_n=U_n.
$
It is clear that $U_n$ are radially symmetric solutions of \eqref{self-similar}.
By \eqref{wat},
we get
\begin{equation}\nonumber
\lim_{n\to+\infty}\sup_{r\ge r_0}(1+r^2)\left|U_n-\frac{2}{r^2}\right|=0.
\end{equation}
We know from \eqref{wat1} that
$$
\lim_{n\to+\infty}\sup_{r\le r_0}\left|U_n-\frac{1}{\mu_n^2}Q\left(\frac{r}{\mu_n}\right)\right|=0.
$$
This completes the proof of \eqref{wat2} and \eqref{wat3}.

For any $0<T<+\infty$, take $u_0=T^{-1}U_n\left(T^{-\frac{1}{2}}x\right)$. Since  $U_n(y)$ are self-similar profiles solve \eqref{self-similar}, the corresponding solution $u$ blows up in finite time $T$ with
\begin{equation}\label{hl0o}
u(x,t)=\frac{1}{T-t}U_n\left(\frac{x}{\sqrt{T-t}}\right).
\end{equation}
Because the functions $U_n$ are bounded, the blow-up is of type I.

We know from \eqref{lj} that \begin{equation}\label{jav}
U_n(y)\sim \frac{1}{|y|^2},\ \ \text{as}\  |y|\to+\infty.\end{equation}
Assume by contradiction that $B(u_0)\ne0$. But for any $\delta>0$ and $|x|\ge\delta$, we have
\begin{equation}\label{zqy}
\begin{aligned}
\lim_{t\to T}||u(x,t)||_{L^\infty(\mathbb{R}^3)}=
\lim_{t\to T}\left\|\frac{1}{T-t}U_n\left(\frac{x}{\sqrt{T-t}}\right)\right\|_{L^\infty(\mathbb{R}^3)}\lesssim \frac{1}{|x|^2}\le\frac{1}{\delta^2}<+\infty,
\end{aligned}
\end{equation}
which  contradicts the assumption $B(u_0)\ne0$. Therefore, the blow-up point of the solution $u(x,t)$ must be the origin, i.e., $B(u_0)=0$.

For any $\delta_1>0$, by \eqref{jav}, \eqref{zqy}, parabolic regularity and the Arzel\`{a}-Ascoli theorem, there exists a function $u^*$ such that
$$
\lim_{t\to T}u(x,t)\to u^*,\ \ \forall \ |x|\ge\delta_1,
$$
where $|u^*(x)|\sim\frac{1}{|x|^2}$.
For $p\in[1,\frac{3}{2})$, we get
$$
\lim_{t\to T}||u(t)-u^*||_{L^p(\mathbb{R}^3)}^p=\lim_{t\to T}\int_0^{\delta_1}|u(r,t)-u^*(r)|^pr^{2}dr\lesssim\int_0^{\delta_1}r^{2-2p}dr\to0,\ \text{as}\ \delta_1\to0,
$$
and \eqref{dzj} is proved.
This completes the proof of Theorem \ref{Main-theorem}.
\end{proof}

\end{document}